\definecolor{gr}{rgb}   {0.,   0.69,   0.23 }
\definecolor{bl}{rgb}   {0.,   0.5,   1. }
\definecolor{mg}{rgb}   {0.85,  0.,    0.85}
\definecolor{yl}{rgb}   {0.8,  0.7,   0.}
\definecolor{or}{rgb}  {0.7,0.2,0.2}
\newtheorem{theorem}{Theorem} [section]
\DeclareMathOperator*{\supp}{supp}
\newcommand{\Z}{\mathbb{Z}}
\newcommand{\R}{\mathbb{R}}
\newcommand{\N}{\mathbb{N}}
\newcommand{\nocontentsline}[3]{}
\newcommand{\tocless}[2]{\bgroup\let\addcontentsline=\nocontentsline#1{#2}\egroup}
\newtheorem*{ackno}{Acknowledgments}
\numberwithin{equation}{section}
\numberwithin{theorem}{section}
\newtheorem{thm}{Theorem}[section]
\newtheorem{lem}[thm]{Lemma}
\newtheorem{prop}[thm]{Proposition}
\newcommand{\abrac}[1]{\left\langle #1 \right\rangle}
\newtheorem{remark}[theorem]{Remark}
\newtheorem{definition}[theorem]{Definition}
\theoremstyle{definition}
\theoremstyle{remark}
\def\R{{\mathbb R}}
\def\N{{\mathbb N}}
\def\supp{\mathop{\rm supp}\nolimits}
\def\adnorm#1{\left\|#1\right\|}
\newcommand{\del}{\partial}
\renewcommand{\hat}{\widehat}
\renewcommand{\Tilde}{\widetilde}
\numberwithin{equation}{section}
\begin{document}
\title[5th Order KP]{Unconditional Uniqueness of 5th Order KP Equations}

\author{James Patterson}
\begin{abstract}
In this paper we study the $5$th Order Kadomstev-Petviashvili (KP) equations posed on the real line. In particular we adapt the energy estimate argument from Guo-Molinet \cite{guo2024well} to conclude unconditional uniqueness of the solution to data map for $5$th order KP type equations. Applying short-time $X^{s,b}$ methods to improve classical energy estimates  provides more than sufficient decay when considering estimates on the interior of the time interval $[0,T]$.
The issue is how we deal with the boundary. By abusing symmetry we can apply multilinear interpolation to gain access to $L^4$ Strichartz estimates, which provide improved derivative gain. When taken together, the regularity of our resultant function space can be arbitrarily close to $L^2$, which in the context of unconditional uniqueness results is almost sharp.
\end{abstract}
\maketitle
\tableofcontents
\section{Introduction}
Our work focuses on the Cauchy problem for $5$th Order KP type equations; for $\delta = \pm 1$
\begin{equation}\label{KPEquation}
\begin{split} 
&\del_x ( \del_t u + \del_x^5 u + \del_x(u^2) ) +\delta \del_y^2 u = 0 \\
&u(0) = u_0.
\end{split}
\end{equation}
where $u(t,x,y) : \R^3 \rightarrow \R$ is unknown with initial data $u_0 : \R^2 \rightarrow \R$. When $\delta = 1$ this is known as the $5$th Order KP-I while $\delta = -1$ is referred to as $5$th Order KP-II. In this paper, we aim to demonstrate the Unconditional Uniqueness of solutions to \eqref{KPEquation} to almost critical regularity. KP equations of $3$rd order were first introduced by Kadomstev and Petviashvili to model two dimensional water waves \cite{KadPet70} (also see \cite{ablowitz1979evolution}) as a natural extension of the well-known one dimensional Korteweg-de Vries (KdV) equation
\begin{equation} \label{KdV}
\del_t u + \del_x^3 u + \del_x(u^2) =0.
\end{equation}
Between KP-I and KP-II, KP-I aims to model water waves with strong surface tension, while KP-II covers the weak surface tension case. In the same way that generalised dispersion models of KdV has seen increasing study, so has generalised dispersions of KP type equations. Indeed starting from the following $5$th order version of the KdV
\begin{equation} \label{5KdV}
\del_t u + \del_x^5 u + \del_x(u^2) = 0,
\end{equation}
by adding weakly transverse perturbations to \eqref{5KdV} one can recover \eqref{KPEquation}; in the same way the usual $3$rd Order KP type equations arise from \eqref{KdV}. For real valued solutions to \eqref{KPEquation}, there are conserved quantities, mass and energy, of the form
\[ M[u](t) := \int_{\R^2} u^2 dx dy,\]
\[ E[u](t) := \int_{\R^2} \frac{1}{2} |\del_x^2 u |^2 + \frac{\delta}{2} |\del_x^{-1} \del_y u |^2 - \frac{1}{6}u^3 dx dy\]
respectively.

When comparing results between KP-I and KP-II, we can observe the value the defocusing present in KP-II plays. One way this can be observed is during analysis of each equations's resonance relation. If we take $\xi$ as the Fourier variable in the $x$ direction and $\mu$ in the $y$, for \eqref{KPEquation}, our resonance relation is
\begin{equation} \label{resonantRelation}
\xi^5 - \xi_1^5 - \xi_2^5 - \delta \frac{(\xi_1 \mu - \xi \mu_1)^2}{\xi \xi_1 \xi_2}.
\end{equation}
In the KP-II case, $\delta = -1$, both components of \eqref{resonantRelation} hold the same sign and hence compound, giving better estimates, while in the KP-I case they conflict. This behaviour is exhibited in the differences between the results of the KP-I and KP-II equations of all dispersions. The $3$rd Order KP-II equation initially had global well-posedness demonstrated by Bourgain \cite{bourgain1992cauchy} for appropriate initial data in $L^2$ in both periodic and non-periodic settings. This was further improved by Takaoka \cite{takaoka2000well} and then Takaoka-Tzvetkov \cite{takaoka2001local} with the use of Anisotropic Sobolev space; the sharp result in critical space was obtained by Hadac-Herr-Koch \cite{hadac2009well}. In comparison for the $3$rd Order KP-I, Molinet-Saut-Tzvetkov \cite{molinet2002well} showed the solution map cannot be $C^2$ smooth at the origin, hence a contraction mapping argument would inevitably fail. Despite this, \cite{molinet2007global}, using other methods, showed the local well-posedness, which has since been improved to $H^{1,0}$ by Guo-Peng-Wang \cite{guo2010local}.

For generalised dispersion KP type equations, Sanwal-Schippa \cite{sanwal2022low} displayed that with high enough dispersion, one can achieve $L^2$ global well-posedness for the KP-I type with either periodic or non-periodic spatial components; see also \cite{herr2024low} for analogous results in higher dimensions. Hadac \cite{hadac2008well} gave local well-posedness for generalised KP-II for dispersion lower than $3$rd order.

Most of the results discovered in the study of the KP equations are within Anisotropic Sobolev space $H^{s_1,s_2}$;
\begin{equation} \label{Anisotropic}
\| u \|_{H^{s_1,s_2}} := \left( \int_{\R^2} |\abrac{\xi}^{s_1}\abrac{\mu}^{s_2} \hat{u}(\xi,\mu)|^2 d\xi d\mu \right)^\frac{1}{2}.
\end{equation}
Once could foresee this being the case, not only considering \eqref{resonantRelation}, but also with the knowledge of the available Strichartz estimates. In general, given $U(t)$ as the linear propagator for a KP type equation, these take the form
\[ \|D_x^{-\gamma} U(t) \varphi \|_{L^q_T L^r_{xy}} \lesssim \|\varphi \|_{L^2_{xy}}, \]
exemplifying that the loss or gain of regularity only appears in the $x$ direction. For most results, this remains consistent. Hence it is usually taken $s_2 = 0$ in \eqref{Anisotropic}. 

Our aim in this paper, is to show an Unconditional Uniqueness result. For the results stated above, the uniqueness of solutions is shown within subspaces of $C_T H^{s_1,s_2}$, a prominent example being the anisotropic adapted Bourgain space $X^{s_1,s_2,b}$. This poses the question whether we can guarantee the uniqueness of the data to solution map for the full space $C_T H^{s_1,s_2}$. T.Kato in \cite{kato1995nonlinear} was the first to pose and prove results of this kind, which are important when considering the convergence of numerical solutions. Quite a few methods have been developed to show results of this type, a successful one including normal form reduction; see \cite{kishimoto2019unconditional,kwon2012unconditional,kwon2020normal, oh2021normal} for examples. This method makes strong use of the respective equation's resonance relation. However, since it is not clear from \eqref{resonantRelation} how one could recover $y$ directional derivative gain, the method seems ill-suited if demanding $s_2 = 0$ in \eqref{Anisotropic}. Instead our work is greatly inspired by Guo-Molinet \cite{guo2024well}, who used a combination of energy and short time $X^{s,b}$ methods to demonstrate the Unconditional Uniqueness of the $3$rd Order KP type equations, within $C_T H^{s,0}$ for $s > 3/4$. In comparison we show the following result.
\begin{thm} \label{mainTheorem}
Both $5$th Order KP-I and KP-II equations \eqref{KPEquation} admit unconditionally unique solutions within anisotropic Sobolev space $C_T H^{s,0}$ for $s > 0$.
\end{thm}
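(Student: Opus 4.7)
The plan is to compare two solutions $u_1, u_2 \in C_T H^{s,0}$ of \eqref{KPEquation} with the same initial data and show $v := u_1 - u_2 \equiv 0$. Setting $w := u_1 + u_2$, the difference satisfies
\begin{equation*}
\del_x(\del_t v + \del_x^5 v + \del_x(vw)) + \delta \del_y^2 v = 0, \qquad v(0) = 0.
\end{equation*}
Following the Guo--Molinet template, I would introduce three function spaces adapted to the fifth-order dispersion: an energy space $E^s$ controlling $\|v\|_{L^\infty_T H^{s,0}}$, a short-time $X^{s,b}$ space $F^s$ whose frequency-$N$ dyadic piece carries a Bourgain-type norm localised to time windows of length $\sim N^{-3}$, and its companion $N^s$. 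The argument rests on three estimates: the linear embedding $\|v\|_{F^s} \lesssim \|v\|_{E^s} + \|\del_x(vw)\|_{N^s}$, a trilinear bound $\|\del_x(vw)\|_{N^s} \lesssim \|v\|_{F^s}\|w\|_{F^s}$, and an energy bound $\|v\|_{E^s}^2 \lesssim \|v(0)\|_{H^{s,0}}^2 + \|v\|_{F^s}^2 \|w\|_{F^s}$. Unconditional uniqueness then follows because any $C_T H^{s,0}$ solution automatically populates the short-time spaces via Duhamel on each small window, so combining the three estimates with $v(0)=0$ and absorbing on a small subinterval forces $v \equiv 0$, which is then bootstrapped to all of $[0,T]$.

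The bulk of the technical work lies in proving the trilinear and energy estimates. In the interior of the short-time windows, the resonance relation \eqref{resonantRelation} gives large modulation whenever the spatial frequencies are non-degenerate; standard Plancherel and orthogonality arguments convert this modulation into more than enough derivative gain, comfortably below $s=0$. The genuine difficulty is the \emph{boundary} contribution to the energy estimate, arising when the modulation weight is forced by the short-time cutoff rather than by the resonance, so that the usual modulation-based derivative gain is unavailable. Here I would exploit the permutation symmetry of the boundary trilinear form $\int(\del_x(vw))\,v\, dx\,dy$ under rearranging its three inputs: after symmetrisation the worst frequency can be placed on whichever factor is most convenient. Interpolating the bilinear $L^2_{txy}$ Strichartz estimate for the fifth-order KP propagator against the trivial $L^\infty_t L^2_{xy}$ bound produces an $L^4_{txy}$ Strichartz estimate carrying a derivative gain, and combining this with the trivial Sobolev embedding in $y$ (free of charge because $s_2 = 0$ in \eqref{Anisotropic}) closes the trilinear estimate for every $s > 0$.

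The principal obstacle is therefore the construction and exploitation of the $L^4$ Strichartz estimate at the boundary of the short-time intervals: without it, one is restricted to the bilinear $L^2$ estimate, which in the 3rd order KP setting of \cite{guo2024well} only reaches $s > 3/4$, and which in the 5th order setting would still leave a nontrivial gap to $s=0$. Gaining the extra derivatives via symmetrisation and multilinear interpolation is what pushes the threshold all the way down. Once this estimate and its interplay with the resonance-based interior estimates are in hand, the three-estimate bootstrap is routine: $v(0)=0$ reduces the energy bound to $\|v\|_{E^s}^2 \lesssim \|v\|_{F^s}^2\|w\|_{F^s}$, which on a sufficiently short interval is absorbed into $\|v\|_{F^s}^2$ to yield $v \equiv 0$ there, and iteration across $[0,T]$ completes the proof.
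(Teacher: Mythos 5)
Your overall blueprint---an energy estimate refined by short-time $X^{s,b}$ analysis, with the sharp time cut-off at the boundary handled by symmetrising the trilinear form and invoking an $L^4$ Strichartz bound---matches the paper's strategy in spirit. But several implementation choices diverge in ways that matter, and one step in your closing argument has a genuine gap.

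First, the paper never sets up the three-space $E^s/F^s/N^s$ framework of Ionescu--Kenig--Tataru that you sketch. It instead works directly with the frequency-localised energy identity \eqref{Energy}, substitutes the Duhamel formula \eqref{Duhamel} into the resulting trilinear term after partitioning $[0,T]$ into windows of length $\sim T N_1^{-1}$ (not $\sim N^{-3}$ as you guess), and uses Lemma \ref{BesovBourgain2} to pass to the Besov--Bourgain norm only as an intermediary inside Proposition \ref{tritri}. There is no trilinear bound $\|\del_x(vw)\|_{N^s} \lesssim \|v\|_{F^s}\|w\|_{F^s}$ in the paper; the derivative gain from Proposition \ref{Pro1} is fed directly back into the energy estimate. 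Second, the multilinear interpolation (Lemma \ref{Multiinterpolation}) is applied to the trilinear form $\Gamma_a$ itself, combining the two acceptability bounds $|\Gamma_a| \lesssim \|u_1\|_{L^2}\|u_2\|_{L^\infty}\|u_3\|_{L^2}$ and $|\Gamma_a| \lesssim \|u_1\|_{L^\infty}\|u_2\|_{L^2}\|u_3\|_{L^2}$ into $|\Gamma_a| \lesssim \|u_1\|_{L^4}\|u_2\|_{L^4}\|u_3\|_{L^2}$; the $L^4$ Strichartz estimate then enters afterwards, as the $(4,4)$ admissible pair of the general Strichartz family. Your description of interpolating $L^2$ Strichartz against $L^\infty_t L^2_{xy}$ to manufacture the $L^4$ estimate misplaces where the interpolation happens.

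The more serious gap is the closing step. Proposition \ref{DifferenceEst} gives
\[
\|w\|_{L^\infty_T H^{s,0}}^2 \lesssim \|w\|_{L^\infty_T H^{s,0}}^2 \Bigl( \sum_{i=1}^{2} \|u_i\|_{L^\infty_T H^{s',0}} \bigl(1 + \|u_i\|_{L^\infty_T H^{s',0}}\bigr)^6 \Bigr),
\]
and the prefactor on the right carries no small power of $T$ that would let you absorb it by shrinking the time interval, contrary to what you claim. The paper closes instead by the scaling $u_{i,\lambda}(t,x,y) = \lambda^4 u_i(\lambda^5 t, \lambda x, \lambda^3 y)$, which drives $\|u_{i,\lambda}\|_{H^{0+,0}}$ to zero while stretching the time of existence; choosing $\lambda$ small enough makes the bracket less than one, forces $w_\lambda \equiv 0$ on $[0,1]$, and then iterating and undoing the dilation covers all of $[0,T]$. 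Without this (or some substitute mechanism that actually makes the constant small), your ``absorb on a sufficiently short interval'' step does not close.
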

We emphasize that Theorem \ref{mainTheorem} is almost sharp. In the context of unconditional uniqueness results, we desire the nonlinearity to make sense as a distribution. In this scenario, we require $u^2 \in L^1_{\text{loc}}$ or rather $u \in L^2_{\text{loc}}$. This means Theorem \ref{mainTheorem} could only ever be improved at best to the critical case of
unconditional uniqueness within $C_T L^2$. The author is currently considering this result in a future work.

The method adapted from \cite{guo2024well}, uses classical energy estimates on solutions, similar to those seen in \cite{molinet2015improvement}, using a smoothing trilinear estimate. Found in \cite{ionescu2007global, sanwal2022low}, these trilinear estimates introduce both Bourgain spaces as intermediaries and derivative gain. This is further improved within \cite{guo2024well} using short-time $X^{s,b}$ methods. By decomposing the time interval $[0,T]$ into lengths $T/N$, dependent on the largest spatial frequency $N$, one can spread derivative loss between linear and integrand terms when the Duhamel formulation is substituted in. Strichartz estimates then allow us to gain bounds on $L^2_T L^\infty_{xy}$ norms which arise out of Hölder. Our main difference in argument is at the boundary of the time interval $[0,T]$. Since one needs to keep the domain of the time integral the same, one must use a sharp time cut-off at the boundary, which the short-time $X^{s,b}$ method cannot handle. In \cite{guo2024well}, they handle this with H\"older and $L^2_T L^\infty_{xy}$ Strichartz estimates; in our scenario, we can reduce the derivative loss with first a multi-linear interpolation, giving access to the use of $L^4_T L^4_{xy}$ Strichartz estimates.

From now on, unless prefixed otherwise, KP-I or KP-II will always refer to their $5$th order version.

\section{Preliminary}
We begin by defining recurring notation. For $x,y \in \R$, we write $x \lesssim y$ to denote that there exists constant $C > 0$ such that $x \leq Cy$; in the case that $0 < C < 1$ we additionally write that $x \ll y$. When both $x \lesssim y$ and $y \lesssim x$ we further write that $x \sim y$. When working with integral exponents $p \in [1, \infty]$ we will refer to $p'$ as the corresponding conjugate exponent, that is $\frac{1}{p} + \frac{1}{p'} = 1$. It will be necessary to sum over dyadic numbers, when doing so we will always capitalize, for example later on we will have $N \in 2^\Z$ or $L \in 2^\N$ etc.
Lastly, when $\varepsilon > 0$ can be taken arbitrarily small, we will often write $x+$ as shorthand for $x+ \varepsilon$; similarly for $x-$, and where $\infty-$ will refer to some arbitrarily large but finite value; in the context of integral exponents $\frac{1}{\varepsilon} + \frac{1}{\infty-} = 1$.
\subsection{Function spaces and Operator definitions}
Since we primarily work in Sobolev spaces, with use of Littlewood-Paley theory, it will of course be necessary to split frequency domains with the use of frequency projections. We make note of the conventions we will follow. Let $\kappa : \R \rightarrow [0,1]$ be a smooth radial cutoff for which $\kappa(\xi) \equiv 1$ for $|\xi| \leq 5/4$ and $\supp \kappa \subset [-8/5, 8/5]$. We then define a smooth annulus cutoff $\phi: \R_+ \rightarrow [0,1]$ by $\phi(\xi) := \kappa(|\xi|/2) - \kappa(|\xi|)$, which we extend to higher scales with $\phi_N(\xi) := \phi(\xi/N)$; in this way $\phi_N$ is supported on $|\xi| \sim N$. 

For the Fourier transform of $f \in \mathcal{S}'$ we follow the usual convention to write $\hat{f}$ (or $\mathcal{F}[f]$ when more appropriate) to denote the Fourier transform in both spatial and temporal variables:
\[ \hat{f}(\xi, \mu, \tau) := \int_{\R^3} e^{-ix\xi}e^{-iy\mu}e^{-it\tau}f(x,y,t) \, dxdydt.\]
We likewise write $\widecheck{f}$ or $\mathcal{F}^{-1}[f]$ to denote the inverse. Often we apply the Fourier transform only in the $x$ component; when doing so we will only make reference to the variable $\xi$ or use $\mathcal{F}_x$ instead. 
We define $P_N f$ as the Fourier multiplier $\hat{P_N f}(\xi) = \phi_N(\xi) \hat{f}(\xi)$, the projection of frequencies down to  frequencies $|\xi| \sim N$. Naturally we continue by defining
\[P_{\lesssim N}f = \sum_{M \lesssim N} P_M f\]
with $P_{\ll N}$ defined in the obvious analogous way. $\Tilde{P}_N$ is used to denote a projection to a larger frequency support, specifically $\Tilde{P}_N f := P_{N/2}f + P_N f + P_{2N} f$, for which it is quick to note $P_N \Tilde{P}_N = P_N$ and we write $\Tilde{\phi}_N := \phi_{N/2} + \phi_N +\phi_{2N}$ for the corresponding multiplier. For Theorem \ref{mainTheorem} we show results in the relevant Anisotropic Sobolev space:
\[ \adnorm{u}_{H^{s,0}} := \left(\int_{\R^2}| \abrac{\xi}^s \hat{u}(\xi,\mu) |^2 \, d\xi d\mu  \right)^\frac{1}{2} \sim \left(\sum_{N } \abrac{N}^{2s}\adnorm{P_N u}_{L^2_{xy}}^2 \right)^\frac{1}{2}.\]
For KP type equations
\[ \del_x(\del_t u + \del_x^5 u + u\del_x u) \pm \del_y^2 u = 0,\]
since we wish to exploit their corresponding dispersion $\tau - w_{\pm}(\xi,\mu) = \tau - \left(\xi^5 \pm \mu^2/\xi\right)$ where $w_+$ corresponds to KP-I while $w_-$ corresponds to KP-II, a famous way to incorporate is with the use of Bourgain spaces. To this end, we define the dispersion projection; for $L > 1$ we define the Fourier multiplier $Q_L$ such that 
\[\mathcal{F}[Q_L f](\xi,\mu,\tau) = \phi_L(\tau - w_{\pm}(\xi,\mu)) \hat{f}(\xi,\mu,\tau);\] 
with its use dispersion has been localized $|\tau - w_\pm(\xi,\mu)| \sim L$. For $L = 1$ we instead use
\[\mathcal{F}[Q_1f](\xi,\mu,\tau) = \sum_{0< L \leq 1}\phi_L(\tau - w_\pm(\xi,\mu)) \hat{f}(\xi,\mu,\tau). \]
Besov Bourgain space is thus defined as
\begin{equation} \label{BourgainNorm}
\|u\|_{X^{0,1/2,1}} := \sum_{L\geq 1} L^\frac{1}{2}\|Q_L u \|_{L^2_{xyt}}.
\end{equation}
Bourgain spaces are designed to quantify how far a function differs from the solution of the linear PDE; Bourgain spaces naturally have nice properties with relation to the linear propagator arising from the linear equation. Furthermore, Strichartz estimates provide a convenient way to convert $L^q$ norms into $L^2$ in exchange for derivative loss/gain; when combined with the transference principle, Lemma \ref{Transference} below, it also provides a means for which to introduce the Bourgain norm \eqref{BourgainNorm}. As such we make note of the linear propagator for KP equations here:
\[ \mathcal{F}[U(t)f](\xi,\mu) = e^{itw_\pm(\xi,\mu)}\hat{f}(\xi,\mu).\]
Since the resulting Strichartz estimates for KP-I and KP-II will be exactly the same, we will not make an effort to distinguish their linear propagators.

\subsection{Well Known Lemmas}
We quickly list off some essential lemmas which will find use later. 
The following is essential in introducing the Besov Bourgain space as an intermediary. See Lemma 2.9 in \cite{tao2006nonlinear}.
\begin{lem}[Transference Principle]
\label{Transference}
Let $\mathcal{T}$ be a $k$-linear operator such that for $1 \leq q,r \leq \infty$,
\[\|\mathcal{T}(U(t)f_1, \dots, U(t)f_k)\|_{L^q_t L^r_x} \lesssim \prod_{j=1}^k \|f_j\|_{L^2}\]
for all $f_j \in L^2_x$. Then
\[\|\mathcal{T}(u_1,\dots,u_k)\|_{L^q_t L^r_x} \lesssim \prod_{j=1}^k \|u_j\|_{X^{0,1/2,1}}\]
for all $u_j \in X^{0,1/2,1}$.
\end{lem}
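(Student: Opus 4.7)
The plan is to pass from a generic space-time input to a superposition of modulated free linear solutions, so that the linear-solution hypothesis can be invoked inside an integral. First, I would dyadically decompose each argument in modulation, $u_j = \sum_{L_j \geq 1} Q_{L_j} u_j$, and use $k$-linearity with the triangle inequality to bound
\[ \|\mathcal{T}(u_1,\dots,u_k)\|_{L^q_t L^r_x} \leq \sum_{L_1,\dots,L_k \geq 1} \|\mathcal{T}(Q_{L_1}u_1,\dots,Q_{L_k}u_k)\|_{L^q_t L^r_x}. \]
This reduces matters to bounding a single multi-dyadic block by $\prod_j L_j^{1/2} \|Q_{L_j}u_j\|_{L^2_{xyt}}$, after which summing over $L_j$ reproduces exactly the Besov $X^{0,1/2,1}$ norms on the right-hand side.

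Next I would realize each $Q_{L_j}u_j$ as a continuous superposition of modulated linear evolutions by Fourier inversion around the characteristic surface $\tau = w_\pm(\xi,\mu)$. Performing the change of variables $\tau = \tau' + w_\pm(\xi,\mu)$ in the Fourier representation of $u_j$ and defining $g^{(j)}_{\tau'}$ by
\[ \mathcal{F}_{xy}[g^{(j)}_{\tau'}](\xi,\mu) := \phi_{L_j}(\tau')\, \mathcal{F}[u_j](\xi,\mu,\tau'+w_\pm(\xi,\mu)), \]
yields the representation
\[ Q_{L_j} u_j(t,x,y) = \int_{|\tau'|\sim L_j} e^{it\tau'} \bigl[U(t) g^{(j)}_{\tau'}\bigr](x,y)\,d\tau'. \]
Plancherel in the $\tau'$ variable gives the crucial identity $\bigl(\int \|g^{(j)}_{\tau'}\|_{L^2_{xy}}^2\, d\tau'\bigr)^{1/2} \sim \|Q_{L_j}u_j\|_{L^2_{xyt}}$, which is where the $L^2$ information on each $u_j$ ultimately enters.

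To close the estimate, I would use $k$-linearity of $\mathcal{T}$ and Minkowski to pull the $\tau'_j$ integrals outside the $L^q_t L^r_x$ norm, producing an integral of $\bigl\|\mathcal{T}\bigl(e^{it\tau'_1}U(t)g^{(1)}_{\tau'_1},\dots,e^{it\tau'_k}U(t)g^{(k)}_{\tau'_k}\bigr)\bigr\|_{L^q_tL^r_x}$. The temporal modulations $e^{it\tau'_j}$ have modulus one and, for the spatial multilinear Fourier-multiplier operators relevant here, factor through $\mathcal{T}$ as a single global phase $e^{it(\tau'_1+\cdots+\tau'_k)}$, which is harmless in $L^q_tL^r_x$; the hypothesis then bounds the integrand by $\prod_j \|g^{(j)}_{\tau'_j}\|_{L^2_{xy}}$. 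Applying Cauchy--Schwarz on each $\tau'_j$-interval of length $\sim L_j$ converts this into $\prod_j L_j^{1/2}\|Q_{L_j}u_j\|_{L^2_{xyt}}$, and summing dyadically in each $L_j$ yields the $X^{0,1/2,1}$ norms. The one step that requires genuine care is the commutation of the modulation phases through $\mathcal{T}$; this is automatic for the pointwise-product type operators we will use, and is precisely why the classical statement (as in \cite{tao2006nonlinear}) is restricted to translation-invariant multilinear operators.
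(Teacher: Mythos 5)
Your proof is correct and follows the standard argument behind the cited Lemma 2.9 of Tao's book: foliate each modulation block as $Q_L u = \int_{|\tau'|\sim L} e^{it\tau'}U(t)g_{\tau'}\,d\tau'$, apply Minkowski together with the free-solution hypothesis, then Cauchy--Schwarz in $\tau'$ over the interval $|\tau'|\sim L$ to produce the $L^{1/2}$ weight, and resum into the Besov $X^{0,1/2,1}$ norm. (The Plancherel relation you write with a ``$\sim$'' is in fact an exact equality, since the change of variables $\tau\mapsto\tau'+w_\pm(\xi,\mu)$ has unit Jacobian.) One clarification on the caveat you rightly flag: the property actually needed is the intertwining identity $\mathcal{T}(e^{it\tau'_1}v_1,\dots,e^{it\tau'_k}v_k)=e^{it\sum_j\tau'_j}\,\mathcal{T}(v_1,\dots,v_k)$, which holds precisely when $\mathcal{T}$ acts \emph{pointwise in $t$} (as do all the spatial Fourier-multiplier and pointwise-product operators used in this paper); ``translation-invariant in $t$'' is not quite the right condition, since a temporal Fourier multiplier with nonconstant symbol is translation-invariant yet fails the identity.
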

The Strichartz estimates later, of course, satisfy the hypothesis of the transference principle, and will be the avenue in which a Besov Bourgain norm is imparted, especially in the KP-II case.

The dispersion present in Bourgain spaces gives rise to what we refer to as the resonance relation; for KP with quadratic nonlinearity, the relation is equal to $w_\pm(\xi,\mu) - w_\pm(\xi_1, \mu_1) - w_\pm(\xi - \xi_1, \mu - \mu_1)$. The natural benefit of Bourgain spaces as an intermediary is the ability of using lower bounds on the resonance relation to offset the derivative in the nonlinearity. The following gives bounds on only a component of the resonance relation, but in the case of KP-II this will turn out to be sufficient. See appendix A for proof.
\begin{lem}[Resonance Relation Estimate]\label{resonance}
For $\omega(\xi) = \xi^5$. With
\[ \Omega(\xi,\xi_1) := \omega(\xi) - \omega(\xi_1) - \omega(\xi - \xi_1), \hspace{0.5cm} \xi_1, \xi \in \R\]
we have
\[ \frac{4}{2^4}|\xi_\text{min}||\xi_\text{max}|^4 \leq |\Omega(\xi,\xi_1)| \leq (5 +\frac{1}{2^4})|\xi_\text{min}||\xi_\text{max}|^4; \]
where $|\xi_\text{min}| = \min\{|\xi|, |\xi_1|, |\xi-\xi_1| \}
$ and $|\xi_\text{max}| = \max\{|\xi|, |\xi_1|, |\xi-\xi_1| \}
$. 
\end{lem}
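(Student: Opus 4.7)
The plan is to obtain a \emph{closed form} for $|\Omega|$ in terms of $|\xi_{\min}|$ and $|\xi_{\max}|$ alone, so that both inequalities reduce to a single-variable estimate.

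First, I would factor the polynomial. Setting $a = \xi_1$ and $b = \xi - \xi_1$ so that $\xi = a + b$, the binomial theorem gives
\begin{equation*}
(a+b)^5 - a^5 - b^5 = 5a^4 b + 10 a^3 b^2 + 10 a^2 b^3 + 5 ab^4 = 5ab(a+b)(a^2 + ab + b^2).
\end{equation*}
Applying the symmetric identity $a^2 + ab + b^2 = \tfrac12\bigl(a^2 + b^2 + (a+b)^2\bigr)$, I would rewrite this as
\begin{equation*}
\Omega(\xi,\xi_1) = \tfrac{5}{2}\, \xi\, \xi_1\, (\xi - \xi_1)\bigl(\xi^2 + \xi_1^2 + (\xi-\xi_1)^2\bigr).
\end{equation*}

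Next, I would reduce the three moduli $|\xi|, |\xi_1|, |\xi - \xi_1|$ to two parameters. Writing $\xi_2 := \xi - \xi_1$, the linear constraint $\xi = \xi_1 + \xi_2$, together with a brief sign case analysis on $\xi_1 \xi_2$, shows that in every configuration precisely one of $\xi, \xi_1, \xi_2$ is the signed sum of the other two; in absolute values this means
\begin{equation*}
|\xi_{\max}| = |\xi_{\text{med}}| + |\xi_{\min}|, \qquad \text{so} \qquad |\xi_{\text{med}}| = |\xi_{\max}| - |\xi_{\min}|.
\end{equation*}
Substituting this equality into the factorised formula produces the closed form
\begin{equation*}
|\Omega| = 5\,|\xi_{\min}|\,|\xi_{\max}|\,(|\xi_{\max}| - |\xi_{\min}|)\bigl(|\xi_{\max}|^2 - |\xi_{\min}||\xi_{\max}| + |\xi_{\min}|^2\bigr).
\end{equation*}

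Finally, I would set $t := |\xi_{\min}|/|\xi_{\max}|$; the constraint $|\xi_{\min}| \leq |\xi_{\text{med}}| = |\xi_{\max}| - |\xi_{\min}|$ forces $t \in [0, 1/2]$. In this parameter,
\begin{equation*}
\frac{|\Omega|}{|\xi_{\min}|\,|\xi_{\max}|^4} = 5(1-t)(1 - t + t^2) =: g(t).
\end{equation*}
The derivative $g'(t) = 5(-2 + 4t - 3t^2)$ has discriminant $16 - 24 < 0$ and satisfies $g'(0) = -10 < 0$, so $g$ is strictly decreasing on $[0, 1/2]$, ranging from $g(0) = 5$ down to $g(1/2) = 15/8$. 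Since $\tfrac{4}{2^4} = \tfrac{1}{4} \leq \tfrac{15}{8}$ and $5 \leq 5 + \tfrac{1}{2^4}$, both stated inequalities follow immediately. The only step demanding genuine thought is the sign case analysis promoting the triangle inequality into the exact equality $|\xi_{\max}| = |\xi_{\text{med}}| + |\xi_{\min}|$; without that equality the factor $(|\xi_{\max}| - |\xi_{\min}|)$ in the closed form could not be controlled from below and the lower bound would degenerate.
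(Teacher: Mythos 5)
Your proof is correct, and it takes a genuinely different route from the one in the paper. The paper (following Hadac's Lemma~3.4) picks the index achieving $|\xi_{\min}|$, writes $\Omega = \bigl(\omega(\xi) - \omega(\xi - \xi_1)\bigr) - \omega(\xi_1)$, bounds the first piece by the mean value theorem as $5|\xi - \theta\xi_1|^4\,|\xi_{\min}|$ with $\tfrac12|\xi_{\max}| \le |\xi - \theta\xi_1| \le |\xi_{\max}|$, and absorbs $|\omega(\xi_1)| = |\xi_{\min}|^5 \le \tfrac{1}{16}|\xi_{\min}|\,|\xi_{\max}|^4$ by the triangle inequality, which is exactly what produces the constants $\tfrac{4}{2^4}$ and $5 + \tfrac{1}{2^4}$. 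You instead factor the polynomial exactly, $\Omega = \tfrac52\,\xi\,\xi_1\,(\xi-\xi_1)\bigl(\xi^2 + \xi_1^2 + (\xi-\xi_1)^2\bigr)$, exploit the signed constraint $\xi - \xi_1 - (\xi-\xi_1) = 0$ to get the identity $|\xi_{\max}| = |\xi_{\text{med}}| + |\xi_{\min}|$, and reduce everything to the one-variable function $g(t) = 5(1-t)(1-t+t^2)$ on $t \in [0,\tfrac12]$. Your closed form yields the \emph{sharp} constants $\tfrac{15}{8}$ and $5$, strictly inside the paper's interval, so the lemma as stated follows a fortiori. The trade-off is generality: the MVT route applies verbatim to any dispersion $\omega(\xi) = \xi^{2m+1}$ with constants depending cleanly on $m$, whereas your binomial factorisation and the resulting $g(t)$ are tailored to the exponent $5$ and would need to be redone for other powers. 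For this paper, which only needs the fifth-order case, your argument is cleaner and gives more information.

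One small point worth recording explicitly: the identity $|\xi_{\max}| = |\xi_{\text{med}}| + |\xi_{\min}|$ is a genuine consequence of the zero-sum constraint, not just a triangle inequality. If $x + y + z = 0$ with $|x| \ge |y| \ge |z|$, then $x = -(y+z)$; if $y$ and $z$ had opposite signs one would get $|x| = \bigl||y|-|z|\bigr| \le |y|$, forcing $|z| = 0$, and in all cases $|x| = |y| + |z|$. You flagged this as the delicate step, and rightly so — without the exact equality the factor $(|\xi_{\max}| - |\xi_{\min}|)$ could not be bounded below and the lower bound would fail as $t \to 1$.
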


Next we state a specific case of the following product estimate; see Corollary 1.1 of \cite{oh20201} and references within.
\begin{lem}[Fractional Product Rule]
\label{fractionalProd}
Let $1 \leq p,q \leq \infty$, $\frac{1}{2} \leq r \leq \infty$ satisfy $\frac{1}{p} + \frac{1}{q} = \frac{1}{r}$. If $\theta > \max(0,\frac{1}{r} - 1)$ then
\begin{equation}
\| J_x^\theta(fg)\|_{L^r(\R)} \lesssim \|J_x^\theta f\|_{L^p(\R)} \|g\|_{L^q(\R)} + \| f\|_{L^p(\R)} \|J_x^\theta g\|_{L^q(\R)}.
\end{equation}
\end{lem}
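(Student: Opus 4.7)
The plan is to prove the estimate by a Littlewood--Paley paraproduct decomposition, isolating the three standard frequency interactions and treating the high-high piece carefully when $r < 1$.

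First I would write $f = \sum_{N} P_N f$ and $g = \sum_{M} P_M g$ and split the product as $fg = \Pi_{HL}(f,g) + \Pi_{LH}(f,g) + \Pi_{HH}(f,g)$, corresponding respectively to the ranges $N \gg M$, $N \ll M$, and $N \sim M$. For the asymmetric pieces $\Pi_{HL}$ and $\Pi_{LH}$, frequency-support considerations force the output dyadic scale to be comparable to the larger input scale, so the operator $J_x^\theta$ can effectively be absorbed into the higher-frequency factor. Applying H\"older's inequality with $\frac{1}{p} + \frac{1}{q} = \frac{1}{r}$, together with the Littlewood--Paley characterization of $L^p$ (and Hardy-space substitutes at the endpoints if needed), then yields the two terms on the right-hand side.

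The principal obstacle is the high-high piece $\Pi_{HH}$. Here $N \sim M$ but the output scale $K$ may range freely over $K \lesssim N$, so $J_x^\theta$ no longer automatically captures the large input frequency. In the range $r \geq 1$ the hypothesis collapses to $\theta > 0$, and a standard square-function estimate closes the argument, redistributing the derivative back onto the high-frequency factor at a cost that telescopes geometrically. For $\tfrac{1}{2} \leq r < 1$ the Littlewood--Paley square function is no longer equivalent to the $L^r$ norm, and the correct substitute is either passage to the Hardy space $H^r$ or the $r$-triangle inequality combined with a Bernstein-type bound for frequency-localized functions. The point is that inverting the direction of an $L^r$-type Bernstein costs a factor of roughly $K^{1/r - 1}$ at each output scale $K$; pulling $K^\theta$ out of $J_x^\theta$ then leaves a residual series of ratio $K^{\theta - (1/r - 1)}$ over $K \lesssim N$, which converges geometrically exactly under the hypothesis $\theta > \tfrac{1}{r} - 1$. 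H\"older on the remaining high-frequency factors and summation in $N$ then complete the argument.

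In summary, the hardest step is the $r < 1$ branch of the high-high interaction, where one must leave the comfortable Littlewood--Paley realm and carefully use Bernstein or Hardy-space tools. The other pieces are essentially the classical Kato--Ponce / Coifman--Meyer fractional Leibniz estimates, while the subcritical threshold $\theta > \tfrac{1}{r} - 1$ emerges naturally from the Bernstein trade-off just described; this is the content of Corollary 1.1 of \cite{oh20201}, whose argument we would follow in detail.
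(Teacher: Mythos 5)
The paper does not actually prove Lemma~\ref{fractionalProd}; it is stated as a known result with a pointer to Corollary~1.1 of \cite{oh20201}, so there is no in-paper argument to compare yours against. That said, your sketch is a faithful outline of the proof one finds in that reference and its antecedents: the paraproduct decomposition into $\Pi_{HL},\Pi_{LH},\Pi_{HH}$, the observation that the two asymmetric pieces reduce to H\"older together with $L^p$-boundedness of Littlewood--Paley projections (so that $J_x^\theta$ lands on the high-frequency factor), and the identification of the high-high interaction as the source of the threshold $\theta>\tfrac{1}{r}-1$ when $r<1$ via the $r$-triangle inequality and Bernstein. That is indeed where the constant $\max(0,\tfrac1r-1)$ comes from, and your heuristic that Bernstein in $L^r$, $r<1$, costs $K^{1/r-1}$ per output scale is the right count.

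Two small cautions if you were to fill in the details. First, the lemma permits $p=\infty$ or $q=\infty$, where the Littlewood--Paley square-function characterization of $L^p$ is not directly available, so one must argue slightly differently (e.g.\ via a pointwise multiplier bound and the Fefferman--Stein vector-valued maximal inequality, or by duality for the paraproduct). Second, in the range $r\ge 1$ where the hypothesis is just $\theta>0$, closing the high-high sum requires the full square-function machinery rather than the crude $r$-triangle inequality; this is standard but should be said. Neither point is a gap in the idea, only in the level of detail, and your summary correctly places the hardest step at the $r<1$ high-high piece.
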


 The following will be required to prove the Strichartz estimates; see \cite{stein1993harmonic} for proof.
\begin{lem}[Hardy-Littlewood-Sobolev Inequality] \label{HLSI}
For $0 < \alpha < n, 1 < p <  q< \infty$ and $\frac{1}{q} = \frac{1}{p} - \frac{\alpha}{n}$, the following holds
\[ \adnorm{\int_{\R^n} \frac{f(y)}{|x-y|^{n-\alpha}} dy}_{L^q(\R^n)} \lesssim \adnorm{f}_{L^p(\R^n)}. \]
\end{lem}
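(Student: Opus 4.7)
The plan is to follow Hedberg's elegant argument, which reduces the Hardy--Littlewood--Sobolev inequality to the boundedness of the Hardy--Littlewood maximal operator $M$ on $L^p$ for $p > 1$. Write the Riesz potential as $I_\alpha f(x) := \int_{\R^n} |x-y|^{\alpha-n} f(y)\,dy$. For any parameter $R > 0$ I would split the integral into a near piece over $|x-y| < R$ and a far piece over $|x-y| \geq R$, then optimize $R$ pointwise in $x$.

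For the near piece, I would dyadically decompose the ball $|x-y| < R$ into annuli $|x-y| \sim R/2^k$ for $k \geq 0$. On each annulus the kernel satisfies $|x-y|^{\alpha-n} \lesssim (R/2^k)^{\alpha-n}$, and the average of $|f|$ over the corresponding ball is bounded by $M(f)(x)$. Summing the resulting geometric series in $k$ (which converges because $\alpha > 0$) gives
\[
\int_{|x-y|<R} |x-y|^{\alpha-n}|f(y)|\,dy \;\lesssim\; R^{\alpha}\,M(f)(x).
\]
For the far piece, I would apply Hölder's inequality with conjugate exponents $p$ and $p'$, reducing matters to computing $\int_{|x-y|\geq R} |x-y|^{(\alpha-n)p'}\,dy$. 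This integral is finite exactly when $(\alpha-n)p' < -n$, i.e.\ $\alpha < n/p$, which is precisely the hypothesis $q < \infty$ rewritten via $1/q = 1/p - \alpha/n$. It evaluates to a constant multiple of $R^{\alpha - n/p}$, so
\[
\int_{|x-y|\geq R} |x-y|^{\alpha-n}|f(y)|\,dy \;\lesssim\; R^{\alpha - n/p}\,\|f\|_{L^p}.
\]

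Combining the two estimates and optimizing by choosing $R$ so that the two right-hand sides balance (i.e.\ $R^{n/p} \sim \|f\|_{L^p}/M(f)(x)$) yields the pointwise Hedberg bound
\[
I_\alpha f(x) \;\lesssim\; M(f)(x)^{1 - \alpha p/n}\,\|f\|_{L^p}^{\alpha p/n} \;=\; M(f)(x)^{p/q}\,\|f\|_{L^p}^{1 - p/q}.
\]
Raising this to the $q$-th power and integrating over $\R^n$, the factor $\|f\|_{L^p}^{q-p}$ comes out of the integral, and I am left with $\|M(f)\|_{L^p}^p$, which is controlled by $\|f\|_{L^p}^p$ via the Hardy--Littlewood maximal theorem (using $p > 1$). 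This produces $\|I_\alpha f\|_{L^q} \lesssim \|f\|_{L^p}$, as desired.

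The argument is entirely classical, so there is no genuine obstacle beyond bookkeeping; the only subtle point is verifying that the hypotheses $1 < p$, $\alpha > 0$, and $\alpha < n/p$ are exactly what is needed to make the maximal-function step, the dyadic summation on the near piece, and the Hölder integral on the far piece simultaneously legal, and that these translate correctly into the stated range $1 < p < q < \infty$ with $1/q = 1/p - \alpha/n$.
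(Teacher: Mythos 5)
The paper does not prove this lemma itself; it simply cites Stein's text and moves on. Your Hedberg-style argument is a correct and complete proof of the statement: the dyadic annulus decomposition bounds the near part by $R^\alpha M f(x)$ (the geometric series converges since $\alpha>0$), H\"older on the far part gives $R^{\alpha-n/p}\|f\|_{L^p}$ with the integrability condition $(\alpha-n)p'<-n$ exactly matching $q<\infty$ via $1/q=1/p-\alpha/n$, and plugging in the optimal $R$ yields the pointwise bound $I_\alpha f(x)\lesssim M f(x)^{p/q}\|f\|_{L^p}^{1-p/q}$, after which the Hardy--Littlewood maximal theorem (requiring $p>1$) finishes the $L^q$ estimate. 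This is essentially the argument found in Stein's reference; the classical alternative would be the endpoint weak-type $(1,\, n/(n-\alpha))$ estimate together with Marcinkiewicz interpolation, which yields the same range but is slightly more involved. One cosmetic point worth noting: the pointwise optimization over $R$ implicitly assumes $0<M f(x)<\infty$, which holds almost everywhere for $f\in L^p$ with $p>1$, and the degenerate case $M f(x)=0$ forces $f\equiv 0$, so the argument is sound.
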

Finally, in a couple of scenarios, we will need to make use of symmetry to change the Lebesgue norm exponent. The following allows us to justify this change of norm. (See Theorem 4.4.1 in \cite{bergh2012interpolation} for more generality and proof)
\begin{lem}[Multilinear Interpolation] \label{Multiinterpolation}
For $n \in \N$, let $(q_0^1,q_0^2,\dots,q_0^n), (q_1^1,q_1^2,\dots,q_1^n) \in [1,\infty]^n$ and $X$ a Banach space. Suppose $T$ is a bounded multi-linear operator from $\prod_{i=1}^n L^{q_0^i} \rightarrow X$ and $\prod_{i=1}^n L^{q_1^i} \rightarrow X$. That is
\[ \|T(u_1,\dots,u_n)\|_X \leq M_0 \prod_{i=1}^n \|u_i\|_{L^{q_0^i}}\]
and
\[ \|T(u_1,\dots,u_n)\|_X \leq M_1 \prod_{i=1}^n \|u_i\|_{L^{q_1^i}}.\]
Then for $\theta \in [0,1]$
and $q_\theta^i$ defined as
\[ \frac{1}{q_\theta^i} = \frac{\theta}{q_0^i} + \frac{1-\theta}{q_1^i},\]
$T$ extends to a bounded multi-linear operator $\prod_{i=1}^n L^{q_\theta^i} \rightarrow X$ with
\[\|T(u_1,\dots,u_n)\|_X \leq M_0^\theta M_1^{1-\theta} \prod_{i=1}^n \|u_i\|_{L^{q_\theta^i}}.\]
\end{lem}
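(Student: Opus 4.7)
The plan is to use complex interpolation via the three-lines lemma, following the classical Riesz-Thorin strategy adapted to a multilinear operator with Banach-valued target. First I would reduce to the case where each $u_i$ is a simple function (a finite linear combination of characteristic functions of sets of finite measure), since such functions are dense in $L^{q_\theta^i}$ whenever $q_\theta^i < \infty$; the cases where some $q_0^i$ or $q_1^i$ equals infinity can be handled by leaving $u_i^z := u_i$ unchanged in that slot (so that interpolation in that variable is trivial), and the endpoints of $\theta \in \{0,1\}$ are immediate.

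Next I would construct an analytic family of functions in each slot. Writing $u_i = |u_i|\,\sigma_i$ with $\sigma_i = u_i / |u_i|$ on the support of $u_i$, set
$$\alpha_i(z) := q_\theta^i \left(\frac{z}{q_0^i} + \frac{1-z}{q_1^i}\right), \qquad u_i^z(x) := |u_i(x)|^{\alpha_i(z)} \sigma_i(x),$$
for $z$ in the closed strip $S = \{z \in \C : 0 \le \Re z \le 1\}$. A direct calculation gives $\alpha_i(\theta) = 1$, so $u_i^\theta = u_i$; in addition, $\Re \alpha_i(it) = q_\theta^i/q_1^i$ and $\Re \alpha_i(1+it) = q_\theta^i/q_0^i$, from which one reads off the boundary norm identities $\|u_i^{it}\|_{L^{q_1^i}} = \|u_i\|_{L^{q_\theta^i}}^{q_\theta^i / q_1^i}$ and $\|u_i^{1+it}\|_{L^{q_0^i}} = \|u_i\|_{L^{q_\theta^i}}^{q_\theta^i / q_0^i}$ uniformly in $t \in \R$.

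Because $X$ is only a Banach space, one cannot apply a scalar analyticity argument directly to $T(u_1^z, \dots, u_n^z)$. I would circumvent this by using Hahn-Banach: choose a norming functional $\phi \in X^*$ with $\|\phi\|_{X^*} = 1$ and $\phi(T(u_1,\dots,u_n)) = \|T(u_1,\dots,u_n)\|_X$, and define $F(z) := \phi(T(u_1^z, \dots, u_n^z))$. Because each $u_i^z$ is a finite sum $\sum_k c_k^{\alpha_i(z)} \sigma_k \mathbf{1}_{E_k}$ when $u_i$ is simple, the multilinearity and continuity of $T$ make $F$ a finite sum of exponentials of $z$, hence entire and bounded on $S$. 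The multilinear boundedness hypotheses then yield $|F(it)| \le M_1 \prod_i \|u_i\|_{L^{q_\theta^i}}^{q_\theta^i/q_1^i}$ and $|F(1+it)| \le M_0 \prod_i \|u_i\|_{L^{q_\theta^i}}^{q_\theta^i/q_0^i}$, and applying the three-lines lemma at $z = \theta$ gives exactly
$$\|T(u_1,\dots,u_n)\|_X = F(\theta) \le M_0^\theta M_1^{1-\theta} \prod_{i=1}^n \|u_i\|_{L^{q_\theta^i}},$$
since the exponents $(1-\theta) q_\theta^i / q_1^i + \theta q_\theta^i / q_0^i$ collapse to $1$ by the very definition of $q_\theta^i$.

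The main obstacle is bookkeeping rather than any conceptual difficulty: one must select the parameterization $\alpha_i(z)$ so that the three-lines lemma produces the desired recipe $M_0^\theta M_1^{1-\theta}$ at the chosen interior point, and separately verify the analyticity and uniform bound of $F$ on $S$ (which is where the reduction to simple functions pays off). Minor care is needed at any slot with $q_0^i = \infty$ or $q_1^i = \infty$, since the formula $|u_i|^{\alpha_i(z)}$ behaves badly when $\alpha_i$ vanishes on the relevant boundary; in those slots one suppresses the $z$-dependence entirely and the corresponding Hölder-type factor simply comes along for the ride. Once these routine matters are dispatched, a final density argument from simple functions to general $u_i \in L^{q_\theta^i}$ closes the proof.
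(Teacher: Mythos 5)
Your proof is correct and amounts to a self-contained, elementary version of the result the paper invokes without proof: the paper simply cites Theorem~4.4.1 of Bergh--L\"ofstr\"om \cite{bergh2012interpolation}, which proves the multilinear interpolation theorem in the general framework of the Calder\'on complex interpolation functor $[\,\cdot\,,\,\cdot\,]_\theta$ and then specializes via the identity $[L^{p_0},L^{p_1}]_\theta = L^{p_\theta}$. You instead run the Riesz--Thorin machinery directly on Lebesgue spaces, which requires no prior development of interpolation-space theory; the Hahn--Banach reduction to a scalar-valued $F$ is exactly the right move to handle the Banach-valued target, and the exponent bookkeeping ($\alpha_i(\theta)=1$, boundary norm identities, and the collapse $(1-\theta)q_\theta^i/q_1^i+\theta q_\theta^i/q_0^i=1$) all check out, as does the three-lines application with the boundary bounds in the correct order $|F(it)|\le A_0$, $|F(1+it)|\le A_1$, $|F(\theta)|\le A_0^{1-\theta}A_1^{\theta}$. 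What your route buys is transparency and independence from \cite{bergh2012interpolation}; what the citation buys is brevity and the ability to handle more general Banach couples. One small remark: the ``minor care'' you flag when some $q_0^i$ or $q_1^i$ equals $\infty$ is not actually needed in the case relevant here ($q_\theta^i<\infty$), since $\Re\alpha_i$ stays in $[q_\theta^i/q_0^i, q_\theta^i/q_1^i]\subset[0,\infty)$ on the strip and the boundary identity $\|u_i^{it}\|_{L^\infty}=1=\|u_i\|_{L^{q_\theta^i}}^{0}$ holds automatically for simple $u_i$; suppressing the $z$-dependence in that slot, as you suggest, would actually break the argument because it would require $\|u_i\|_{L^\infty}<\infty$ for general $u_i\in L^{q_\theta^i}$. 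Only when $q_0^i=q_1^i=q_\theta^i=\infty$ must the slot be left alone, and there interpolation in that variable is genuinely trivial.
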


\section{Strichartz and Trilinear Estimates}
We aim to show a Strichartz estimate for the $5$th
order KP equations. This is essentially following the same proof for the $3$rd order KP case (see \cite{guo2024well}), though of course the resultant derivative loss ends up different.
\begin{definition}
We say a pair of exponents $(q,r)$ are \textbf{admissible} if they satisfy the following
\begin{itemize}
    \item $2 \leq q,r \leq \infty$
    \item $\frac{1}{2} \big( \frac{1}{2} - \frac{1}{r} \big) \leq \frac{1}{q} \leq \frac{1}{2} - \frac{1}{r}$
    \item $(q,r) \neq (2,\infty), (4, \infty)$
\end{itemize}
We define the corresponding constant $\beta(q,r) := \frac{4}{2} - \frac{4}{r} - \frac{5}{q}$.
\end{definition}

\begin{lem}[Strichartz Estimate]
Let $T > 0$, and assume that exponents $(q,r), (\Tilde{q}, \Tilde{r})$ are admissible. Then the following hold
\begin{equation}\label{Strichartz1}
    \adnorm{D_x^{-\beta(q,r)}U(t)\varphi}_{L^q_TL^r_{xy}} \lesssim \adnorm{\varphi}_{L^2_{xy}}
\end{equation}
and
\begin{equation}\label{Strichartz2}
\adnorm{\int_0^t D_x^{-\beta(q,r)-\beta(\Tilde{q}, \Tilde{r})} U(t-s)f(s) ds}_{L^q_T L^r_{xy}} \lesssim \adnorm{f}_{L^{\Tilde{q}'}_T L^{\Tilde{r}'}_{xy}}.
\end{equation}
\end{lem}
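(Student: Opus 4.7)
The plan is to run the $TT^*$-plus-Hardy--Littlewood--Sobolev scheme (as in the 3rd order KP treatment of \cite{guo2024well}), adjusting exponents for the 5th order dispersion. The starting point is a frequency-localized dispersive estimate for the kernel
\[
K_N(t,x,y) \;=\; \iint_{\R^2} e^{i(x\xi + y\mu + t\xi^5 \pm t\mu^2/\xi)}\, \phi_N(\xi)\, d\mu\, d\xi
\]
of $U(t)P_N$. I would evaluate the $\mu$-integral first: its phase is quadratic in $\mu$ with coefficient $\pm t/\xi$, so completing the square reduces it to a Fresnel integral of modulus $\sim |t/\xi|^{-1/2} = |t|^{-1/2}|\xi|^{1/2}$ with a residual phase linear in $\xi$. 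The remaining $\xi$-integral then has amplitude $\phi_N(\xi)|\xi|^{1/2}$ and phase $\psi(\xi)$ satisfying $|\psi''(\xi)| = 20|t||\xi|^3 \sim |t| N^3$ uniformly on $\supp\phi_N$, so van der Corput produces a factor $(|t|N^3)^{-1/2}\cdot N^{1/2} = |t|^{-1/2}N^{-1}$. Combining the two contributions gives the dispersive bound
\[
\|U(t) P_N \varphi\|_{L^\infty_{xy}} \;\lesssim\; |t|^{-1}N^{-1}\|P_N\varphi\|_{L^1_{xy}}.
\]

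Interpolating this against Plancherel produces the family $\|U(t)P_N\varphi\|_{L^r_{xy}} \lesssim (|t|N)^{-(1-2/r)}\|P_N\varphi\|_{L^{r'}_{xy}}$, and the standard $TT^*$ argument combined with Lemma \ref{HLSI} applied to the resulting time kernel $|t-s|^{-(1-2/r)}$ yields
\[
\|U(t)P_N\varphi\|_{L^q_TL^r_{xy}} \;\lesssim\; N^{1/r - 1/2}\|P_N\varphi\|_{L^2_{xy}},\qquad \tfrac{2}{q}+\tfrac{2}{r}=1,
\]
which is exactly $\|D_x^{-\beta(q,r)}U(t)P_N\varphi\|_{L^q_TL^r_{xy}}\lesssim\|P_N\varphi\|_{L^2_{xy}}$ on the scaling line, where one reads off $\beta(q,r)=2-4/r-5/q = 1/r - 1/2$. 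To fill out the full admissibility triangle I would then interpolate this scaling-line family against the trivial energy identity $\|U(t)\varphi\|_{L^\infty_T L^2_{xy}}=\|\varphi\|_{L^2_{xy}}$ (covering the region between scaling and energy) and against an anisotropic Sobolev-type estimate obtained by Bernstein in the $x$-frequency (to reach the lower boundary $\tfrac{4}{q}+\tfrac{2}{r}=1$, where the localization $|\xi|\sim N$ is the key ingredient). Because $\beta(q,r)$ is precisely the scaling count, interpolation automatically produces the advertised exponent throughout the admissible region, and summing dyadic blocks recovers the unlocalized statement \eqref{Strichartz1}.

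The inhomogeneous estimate \eqref{Strichartz2} then follows by a duality composition: the retarded convolution factors as $T_{q,r}\circ T_{\tilde q,\tilde r}^{\,*}$, where $T_{q,r}$ denotes the homogeneous Strichartz operator with loss $\beta(q,r)$, so the total derivative loss is $\beta(q,r)+\beta(\tilde q,\tilde r)$; the Christ--Kiselev lemma reduces the time integral from $\R$ to $[0,t]$, which is permitted once the excluded endpoints $(2,\infty)$ and $(4,\infty)$ are avoided by hypothesis. The main technical obstacle is the van der Corput step in the kernel estimate: the amplitude $\phi_N(\xi)|\xi|^{1/2}$ is not unit-normalized and has derivatives of size $\sim N^{-1/2}$, so one must either partition $\supp\phi_N$ into subintervals on which the stationary point of $\psi$ is isolated or integrate by parts with careful accounting, to confirm that the $|t|^{-1}N^{-1}$ bound holds uniformly in $(t,x,y)$. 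This bookkeeping is routine and mirrors the treatment of the corresponding step in \cite{guo2024well}.
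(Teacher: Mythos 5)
Your dispersive kernel bound (equivalent to the paper's $|G|\lesssim |t|^{-1}N^{-1}$ from van der Corput) and the $TT^*$--Hardy--Littlewood--Sobolev scheme match the paper's approach in spirit, but your strategy for covering the full admissible triangle has a genuine gap. Using only the oscillatory (van der Corput) kernel estimate, the $TT^*$/HLS step yields \eqref{Strichartz1} precisely on the upper boundary $\tfrac{2}{q}+\tfrac{2}{r}=1$. You then propose to fill the interior by interpolating this family against the energy identity at $(q,r)=(\infty,2)$, but $(\infty,2)$ is already a vertex of that upper boundary (the lines $\tfrac{2}{q}+\tfrac{2}{r}=1$ and $\tfrac{4}{q}+\tfrac{2}{r}=1$ meet there), so this interpolation is degenerate and never leaves the line. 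Your proposed fallback---``an anisotropic Sobolev-type estimate obtained by Bernstein in the $x$-frequency''---cannot reach the lower boundary $\tfrac{4}{q}+\tfrac{2}{r}=1$ either: there is no $\mu$-localization, so Bernstein in $y$ is unavailable, and a genuine $L^\infty_T L^{r}_{xy}$ bound of the required strength does not hold. What the paper actually does is retain \emph{both} kernel bounds --- the non-oscillatory bound $|G|\lesssim |t|^{-1/2}N^{3/2}$ as well as the van der Corput one --- and interpolate them to a one-parameter family $|G|\lesssim |t|^{-(1/2+\theta)}N^{3/2-5\theta}$, $\theta\in[0,1/2]$; feeding this family through HLS is exactly what sweeps out the admissible triangle, with $\theta=0$ and $\theta=1/2$ giving the lower and upper boundaries respectively. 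You have dropped the $\theta=0$ ingredient entirely.

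A second omission: summing the frequency-localized estimates ``by Littlewood--Paley'' does not work at $r=\infty$, and the paper needs $r=\infty$ in its applications. The paper handles this by proving the estimate for $q\pm$ in $\ell^2$-Besov form and then applying $(\theta,1)$-real interpolation in $q$ to land in an $\ell^1$-based space, from which the $L^\infty_{xy}$ estimate follows by Cauchy--Schwarz in the dyadic sum. This step (and the fact that its failure at $q=2,4$ is precisely why $(2,\infty)$ and $(4,\infty)$ are excluded) is absent from your argument. Your remark that Christ--Kiselev handles the retarded integral in \eqref{Strichartz2} is a reasonable alternative to the paper's direct $TT^*$ treatment, but it cannot repair the two gaps above.
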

\begin{proof}
We begin by proving \eqref{Strichartz1}. Note $D_x^{-\beta(q,r)}U(t): L^2_{xy} \rightarrow L^q_TL^r_{xy}$ has adjoint operator $U^*: L^{q'}_TL^{r'}_{xy} \rightarrow L^2_{xy}$ with $U^*f := \int_0^T D_x^{-\beta(q,r)}U(-s)f(s) ds$. Hence, by the $TT^*$ method \eqref{Strichartz1} is equivalent to showing

\begin{equation} \label{TT*}
\adnorm{\int_0^t D_x^{-2\beta(q,r)}U(t-s)\varphi(s) ds}_{L^q_TL^r_{xy}} \lesssim \adnorm{\varphi}_{L^{q'}_TL^{r'}_{xy}}.
\end{equation}

Consider that we can rewrite, for $N \in 2^\Z$;
\[ U(t)P_N \varphi = G(\cdot,\cdot,t)*\varphi\]
where
\[ G(x,y,t) := \int_{\R^{2}} e^{i(x\xi + y \mu)} e^{it\left(\xi^5 \pm \frac{|\mu|^2}{\xi}\right)} \phi\left( \frac{\xi}{N}\right)d\xi d\mu.\]
\begin{remark}
As an aside, we can calculate the integral 
\[ \int_\R e^{iy\mu} e^{\pm it\frac{\mu^2}{\xi}} d\mu\]
by making the substitution $\mu = \sqrt{\frac{\xi}{t}}\eta$ leading to
\[ \int_\R t^{-\frac{1}{2}}|\xi|^\frac{1}{2} e^{i\left(y\sqrt{\frac{\xi}{t}}\eta \pm \eta^2 \right)} d \eta. \]
Further substituting $\eta =\zeta \mp \frac{y}{2}\sqrt{\frac{\xi}{t}}$ leads to
\[  t^{-\frac{1}{2}}\xi^\frac{1}{2}e^{\pm \frac{iy^2\xi}{4t}} \int_\R e^{\mp i\zeta^2} d\zeta = t^{-\frac{1}{2}}\xi^\frac{1}{2}e^{\pm\frac{iy^2\xi}{4t}}\sqrt{\frac{\pi}{2}}(1\pm i). \]
\end{remark}
By applying the above, we observe that
\begin{align*}
|G(x,y,t)| &\leq \sqrt{2\pi}\left|\int_\R t^{-\frac{1}{2}}|\xi|^{\frac{1}{2}} e^{ix\xi + it\xi^5}  e^{\pm \frac{iy^2\xi}{4t}}\phi\left(\frac{\xi}{N} \right) d\xi\right| \\
& \lesssim  t^{-\frac{1}{2}}\sup_{x \in \R} \left|\int_\R |\xi|^{\frac{1}{2}} e^{ix\xi + it\xi^5} \phi\left(\frac{\xi}{N} \right) d\xi\right|.
\end{align*}
We of course have the obvious estimate 
\begin{equation} \label{G1Est}
|G(x,y,t)| \leq t^{-\frac{1}{2}} \int_\R |\xi|^{\frac{1}{2}} \phi\left( \frac{\xi}{N} \right) d\xi \lesssim t^{-\frac{1}{2}} N^{\frac{3}{2}} .
\end{equation}
Also by substitution $\eta = \xi/N$;
\[ \sup_{x \in \R} \left|\int_\R |\xi|^{\frac{1}{2}} e^{ix\xi + it\xi^5} \phi\left(\frac{\xi}{N} \right) d\xi\right| = N^{\frac{1}{2}}N\sup_{x \in \R} \left|\int_\R |\eta|^{\frac{1}{2}} e^{ixN\eta + itN^{5}\eta^5} \phi\left(\eta\right) d\eta\right|.\]
Taking then $g(\eta) = |\eta|^\frac{1}{2}\psi(\eta)$ and $h(\eta) = xN\eta + tN^{5} \eta^5$, we have an Oscillatory integral, with which an application of Van der Corput's Lemma (see \cite{guo2008decay, stein1993harmonic}) yields
\[ \left|\int_\R e^{ih(\eta)} g(\eta) d\eta \right| \lesssim t^{-\frac{1}{2}}N^{-\frac{5}{2}}.\]
Thus in addition to \eqref{G1Est}, we also have
\begin{equation} \label{GEst2}
|G(x,y,t)| \lesssim t^{-1}N^{-1}.
\end{equation}
Interpolating between \eqref{G1Est} and \eqref{GEst2}, we have for $\theta \in [0,\frac{1}{2}]$,
\begin{equation}
|G(x,y,t)| \lesssim t^{-(\frac{1}{2}+ \theta)}N^{\frac{3}{2} -5\theta}.
\end{equation}
By Young's Convolution Inequality, it then follows that 
\begin{equation} \label{Linfty-L1}
\adnorm{U(t)P_N\varphi}_{L^{\infty}_{xy}} = \adnorm{G(\cdot, \cdot, t) * \varphi}_{L^\infty_{xy}} \leq \adnorm{G}_{L^\infty_{xy}}\adnorm{\varphi}_{L^1_{xy}} \lesssim  t^{-(\frac{1}{2} +\theta)}N^{\frac{3}{2}- 5\theta}\adnorm{\varphi}_{L^1_{xy}}.\end{equation}
Since $|e^{it\xi^5 \pm \frac{\mu^2}{\xi}}\phi_N(\xi)| \leq 1$ it is of course true that
\begin{equation} \label{L2-L2}
\adnorm{U(t)P_N\varphi}_{L^{2}_{xy}} \lesssim \adnorm{\varphi}_{L^2_{xy}}.
\end{equation}
Hence, we can interpolate between \eqref{Linfty-L1} and \eqref{L2-L2}, to get a general estimate for $L^r$ for $r \geq 2$:
\begin{equation} \label{GIntEst}
\adnorm{U(t)P_N\varphi}_{L^{r}_{xy}} \lesssim t^{-(\frac{1}{2}+\theta)(1-\frac{2}{r})}N^{(\frac{3}{2} - 5\theta)(1-\frac{2}{r})}\adnorm{\varphi}_{L^{r'}_{xy}}.
\end{equation}
We can now show a frequency truncated version of \eqref{TT*}. By Minkowski's Integral Inequality, and then \eqref{GIntEst}: 
\begin{align} \adnorm{\int_0^t U(t-s)P_N\varphi(s) ds}_{L^q_T L^r_{x,y}} &\leq \adnorm{\int_0^t \adnorm{U(t-s)P_N\varphi(s)}_{L^r_{xy}} ds}_{L^q_T} \nonumber \\ & \lesssim \adnorm{ \int_0^t |t-s|^{-(\frac{1}{2}+\theta)(1-\frac{2}{r})}N^{(\frac{3}{2}-5\theta)(1-\frac{2}{r})}\|\Tilde{P}_N\varphi \|_{L^{r'}_{xy}} }_{L^q_T} \nonumber\\
&\lesssim N^{2\beta(q,r)} \|\varphi \|_{L^{q'}_TL^{r'}_{xy}},
\end{align}
where we have applied Hardy-Littlewood-Sobolev Inequality (Lemma \ref{HLSI}), using the choice of $\theta = \frac{2}{q}\left(\frac{r}{r-2}\right)- \frac{1}{2}$, which leads to the above! Hence, by $TT^*$ method, by applying to $\Tilde{P}_N\varphi$ it follows
\begin{equation} \label{TruncatedStrichartz}
\adnorm{U(t)P_N\varphi}_{L^q_T L^{r}_{xy}} \lesssim N^{\beta(q,r)}\| \Tilde{P}_N\varphi \|_{L^{2}_{xy}}.
\end{equation}
For $r < \infty$, we can complete the proof by applying the Littlewood-Paley Square function estimate, Minkowski Integral inequality and finally \eqref{TruncatedStrichartz} to get
\begin{align*} \| D_x^{-\beta(q,r)}U(t)\varphi\|_{L^q_TL^r_{xy}} & \lesssim \| D_x^{-\beta(q,r)}U(t)P_N\varphi\|_{L^q_TL^r_{xy}\ell^2_N} \leq \| D_x^{-\beta(q,r)}U(t)P_N\varphi\| _{\ell^2_NL^q_TL^r_{xy}} \\
& \lesssim \|\Tilde{P}_N\varphi \|_{\ell^2_N L^2_{xy}} \lesssim \|\varphi\|_{L^2_{xy}}.
\end{align*}
Littlewood-Paley does not hold in $L^\infty$, so we are forced to use a different method for $r= \infty$; instead we use $(\theta,1)$-real interpolation between the following two estimates
\begin{equation}
\begin{split} \label{l1InterpolationEst}
\|U(t)P_N\varphi\|_{\ell^{-\beta(q+,r),2}_N L^{q+}_{T}L^{\infty}_{xy}}&\lesssim \|\varphi\|_{L^2} \\
\|U(t)P_N\varphi\|_{\ell^{-\beta(q-,r),2}_N L^{q-}_{T}L^{\infty}_{xy}}&\lesssim  \|\varphi\|_{L^2}.
\end{split}
\end{equation}
By Theorem 5.6.2 and Theorem 5.2.1  from \cite{bergh2012interpolation}, we get
\begin{equation}
(\ell^{-\beta(q+,r),2}L^{q+}_{T}L^{\infty}_{xy}, \ell^{{-\beta(q-,r)},2}L^{q-}_{T}L^{\infty}_{xy})_{\theta,1}=\ell^{-\beta(q,r),1}L^{q,1}_{T}L^{\infty}_{xy}.
\end{equation}
That is, Theorem 5.6.2, with $\beta(q,r) = \theta \beta(q+,r) + (1-\theta)\beta(q-,r)$ for some $\theta$, implies
\[ (\ell^{-\beta(q+,r),2}A_0, \ell^{-\beta(q-,r),2}A_1)_{\theta,1} =\ell^{-\beta(q,r),1}(A_0,A_1)_{\theta,1}\]
and Theorem 5.2.1 with $\frac{1}{q} = \frac{\theta}{q+} + \frac{1-\theta}{q-}$ for some $\theta$,
gives
\[ (L^{q+}_T,L^{q-}_T)_{\theta,1} = L^{q,1}_T.\]
Thus we can get 
\[
\|D_x^{-\beta(q,r)} U(t)\varphi\|_{L^{q}_{T}L^{\infty}_{xy}}\lesssim \|U(t)P_N\varphi \|_{\ell^{-\beta(q,r),1}_N L^{q}_{T}L^{\infty}_{xy}}\lesssim \|\varphi\|_{L^2}. 
\]
This completes the proof for \eqref{Strichartz1}.
We remark that one of \eqref{l1InterpolationEst} not holding in the case $q = 4$ or $q = 2$, is the reason why $(q,r) = (2,\infty), (4,\infty)$ are not admissible pairs.
To prove \eqref{Strichartz2}, apply \eqref{Strichartz1} with admissible pair $(q,r)$ and then its adjoint version which again follows from the $TT^*$ method, but instead with admissible pair $(\Tilde{q},\Tilde{r})$. 
\end{proof}

When finding \textit{a-priori} estimates, we work with the energy of the relevant KP equation. For general nonlinearity $f$, we consider the relevant $5$th order KP type equation, sometimes written as
\begin{equation} \label{KP5Nonlinef}
\del_t u + \del_x^5 u + \frac{1}{2}\del_x(f) \pm \del_x^{-1}\del_{yy}u = 0.
\end{equation}
After applying $P_N$ to \eqref{KP5Nonlinef}, multiply by $P_N u$ to obtain
\[ \partial_t P_N u \cdot P_N u + \partial_{x}^5 P_N u \cdot P_N u + \frac{1}{2} \partial_x P_N(f) \cdot P_N u \pm \partial_x^{-1}\partial_{yy} P_N u \cdot P_N u  = 0.\]
Integrating in the spatial variables, we observe the following terms come to nothing:
\[ \int \partial_{x}^5 P_N u\cdot P_N u = -\frac{1}{2}\int \del_x(\del_x^2 P_N u)^2 = 0\]
\[ \int \partial_x^{-1}\partial_{yy} P_N u \cdot P_N u = -\frac{1}{2}\int \partial_x (\partial_y \partial_x^{-1}P_N u)^2 = 0.\]
Hence, after integrating in time, solutions to \eqref{KP5Nonlinef} must satisfy
\begin{equation} \label{Energy}
\frac{1}{2}\adnorm{P_N u(t)}_{L^2}^2 - \frac{1}{2}\adnorm{P_N u(0)}_{L^2}^2 + \int_0^t \int_{\R^2}\partial_x(P_Nf )P_N u =0.
\end{equation}
In our applications, the nonlinearity is quadratic; it takes the form $f = u_1u_2$ for some functions $u_1,u_2$. We can split up the frequencies of $P_N(u_1u_2)$ into low-high and high-high interactions;
\begin{align*}
P_N(u_1u_2) &= P_N(P_{\ll N}u_1 \Tilde{P}_Nu_2) + P_N(\Tilde{P}_{ N}u_1 P_{\ll N}u_2) + \sum_{M \gtrsim N}P_N(P_Mu \Tilde{P}_Mu) \\
& = \sum_{K \ll N} P_N(P_{K}u_1 \Tilde{P}_Nu_2) +P_N(\Tilde{P}_{ N}u_1 P_{K}u_2) + \sum_{M \gtrsim N}P_N(P_Mu \Tilde{P}_Mu).
\end{align*}
Thus, when estimating the remaining term present in \eqref{Energy}
\begin{equation} \label{EnergyIntegrand}
\int_0^t \int_{\R^2}\partial_xP_N(u_1 u_2)P_N u,
\end{equation}
we observe that we can restrict the frequency interactions down to one case. Namely, for frequency projections $P_{N_1}u_1, P_{N_2}u_2$ and $P_N u$, then relabeling frequencies based on order $N_\text{max} \geq N_{\text{med}} \geq N_{\text{min}}$, we need only consider $N_\text{max} \sim N_{\text{med}} \gtrsim N_{\text{min}}$.

Since we are left to find estimates on terms of the form \eqref{EnergyIntegrand}, the following helps categorise the different instances in which it arises.

\begin{definition}
For $u,v \in L^2(\R)$ and $a \in L^\infty(\R^2)$, we define $\Lambda_a$ as the \textbf{convolution multiplier} with symbol $a$, specifically
\[ \mathcal{F}_x[\Lambda_a(u,v)](\xi) := \int_{\xi_1+\xi_2 = \xi} a(\xi_1, \xi_2) \hat{u}(\xi_1)\hat{v}(\xi_2).\]
We say that $a$ is \textbf{acceptable} if 
\begin{equation} \label{acceptable} \|\Lambda_a(u,v)\|_{L^2_x} \lesssim \|u\|_{L^\infty_x} \|v\|_{L^2_x}.
\end{equation}

Furthermore, we define for $u_i \in L^2(\R^{3})$ and $a \in L^\infty(\R^{2})$

\[ \Gamma_a(u_1,u_2,u_3):= \int_{\sum_{i=1}^3(\tau_i,\xi_i,\mu_i) =0} a(\xi_2,\xi_3) \prod_{i=1}^3 \hat{u}_i(\tau_i,\xi_i,\mu_i).\]
One can see that this leads to the expression
\begin{equation}\label{Gamma} \Gamma_a(u_1,u_2,u_3) = \iiint u_1(x,y,t) \Lambda_a(u_2(\cdot,y,t), u_3(\cdot,y,t))(x) dt dx dy.\end{equation}
\end{definition}
An important aspect about $\Gamma_a$ is that there is an in-built symmetry,
that is
\begin{equation} \label{roleswitching}
\Gamma_a(u_1,u_2,u_3) = \Gamma_{\Tilde{a}}(u_3,u_1,u_2) = \Gamma_{\Tilde{\Tilde{a}}}(u_2,u_3,u_1)
\end{equation}
where $\Tilde{a}(\xi_1,\xi_2) = a(-\xi_1 - \xi_2, \xi_1)$ and $\Tilde{\Tilde{a}}(\xi_1,\xi_2) = a(\xi_2, -\xi_1 - \xi_2)$. This is an important aspect; it allows for interpolation on bounds for $\Gamma_a$, but requires the knowledge that $a, \tilde{a}$ and $\tilde{\tilde{a}}$ are all acceptable with respect to \eqref{acceptable} which does not always necessarily follow.

We briefly consider the following non-trivial example; it is requirement that this is acceptable, as it allows for shifting around of derivatives from high to low frequencies. 
\begin{lem} \label{Commutator}
For $g \in L^\infty(\R)$, $h \in L^2(\R)$ and $ N_3\ll N$, the following commutator estimate holds
$$
\|\Lambda_{a_1}(g,h)\|_{L^2_x}=N_3^{-1}  \|[\partial_x P_N, P_{N_3}g] \tilde{P}_N h\|_{L^2_x}\lesssim \|g\|_{L^\infty_x} \|h\|_{L^2_x}
$$
where 
\begin{equation}\label{a1}
a_1(\xi_1,\xi_2)=N_3^{-1}\phi_{N_3}(\xi_1)\tilde \phi_N(\xi_2)[\phi_N(\xi_1+\xi_2)(\xi_1+\xi_2)-\phi_N(\xi_2)\xi_2] \;,
\end{equation}
with $a_1 \in L^\infty(\R^2)$
\end{lem}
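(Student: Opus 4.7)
\textbf{Proof plan for Lemma \ref{Commutator}.} My plan is to split the statement into two pieces: (i) an algebraic identity that converts $\Lambda_{a_1}(g,h)$ into the commutator $N_3^{-1}[\partial_x P_N, P_{N_3}g]\tilde{P}_N h$, and (ii) the actual $L^2$ bound on this commutator. For (i), I will just expand the commutator on the Fourier side. Using $\widehat{\partial_x P_N}=i\xi\phi_N(\xi)$ and $\phi_N \tilde\phi_N = \phi_N$, the Fourier transform of $\partial_x P_N(P_{N_3}g\cdot \tilde P_N h) - P_{N_3}g\cdot \partial_x P_N\tilde P_N h$ is exactly
$$i\,\phi_{N_3}(\xi_1)\tilde\phi_N(\xi_2)\bigl[(\xi_1+\xi_2)\phi_N(\xi_1+\xi_2)-\xi_2\phi_N(\xi_2)\bigr]$$
evaluated on the convolution set $\xi_1+\xi_2=\xi$. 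Comparing with \eqref{a1}, this matches $iN_3\,a_1(\xi_1,\xi_2)$, which gives the first equality in the statement and reduces everything to showing
$$\|[\partial_x P_N, P_{N_3}g]\tilde{P}_N h\|_{L^2_x} \lesssim N_3 \|g\|_{L^\infty_x}\|h\|_{L^2_x}.$$

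For the commutator bound I will pass to physical space. If $K_N$ denotes the convolution kernel of $P_N$, so that $K_N(z)=NK_1(Nz)$ with $K_1=\check\phi\in\mathcal{S}(\R)$, then
$$[\partial_x P_N,P_{N_3}g]\tilde P_N h(x)=\int K_N'(x-y)\bigl[P_{N_3}g(y)-P_{N_3}g(x)\bigr]\tilde P_N h(y)\,dy.$$
Applying the fundamental theorem of calculus,
$$P_{N_3}g(y)-P_{N_3}g(x)=(y-x)\int_0^1 (P_{N_3}g)'(x+s(y-x))\,ds,$$
and the Bernstein inequality $\|(P_{N_3}g)'\|_{L^\infty}\lesssim N_3\|g\|_{L^\infty}$ (which for $g\in L^\infty$ follows from $(P_{N_3}g)'=K_{N_3}'\ast g$ together with $\|K_{N_3}'\|_{L^1}\lesssim N_3$) yields the pointwise bound
$$\bigl|[\partial_x P_N,P_{N_3}g]\tilde P_N h(x)\bigr|\lesssim N_3\|g\|_{L^\infty}\,(L_N\ast|\tilde P_N h|)(x),\qquad L_N(z):=|K_N'(z)|\cdot|z|.$$

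The key kernel estimate is $\|L_N\|_{L^1}\lesssim 1$ uniformly in $N$, which is immediate from the rescaling $K_N(z)=NK_1(Nz)$: the change of variable $w=Nz$ absorbs all powers of $N$ and leaves $\int|K_1'(w)||w|\,dw<\infty$, which is finite since $K_1\in\mathcal{S}$. Combining this with Young's inequality $\|L_N\ast|\tilde P_N h|\|_{L^2}\lesssim \|L_N\|_{L^1}\|\tilde P_N h\|_{L^2}\lesssim \|h\|_{L^2}$ delivers the commutator bound, and dividing by $N_3$ gives the asserted inequality for $\Lambda_{a_1}$. The boundedness $a_1\in L^\infty(\R^2)$ is clear, since $(\xi_1+\xi_2)\phi_N(\xi_1+\xi_2)-\xi_2\phi_N(\xi_2)=O(N_3)$ on the support of $\phi_{N_3}(\xi_1)\tilde\phi_N(\xi_2)$ by the same mean value argument, so the $N_3^{-1}$ normalization produces a symbol bounded uniformly in $N,N_3$.

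I expect the main obstacle to be purely bookkeeping: matching the Littlewood--Paley factors $\phi_N,\tilde\phi_N,\phi_{N_3}$ carefully so that the symbol on the Fourier side coincides with $N_3\,a_1$ (rather than some variant with the ``wrong'' $\tilde\phi_N$ placement) and checking that $\phi_N\tilde\phi_N=\phi_N$ is applied exactly where needed. The harmonic-analytic content — the FTC plus $\|L_N\|_{L^1}\lesssim 1$ — is standard and scale-invariant in $N$, so the hypothesis $N_3\ll N$ is not actually used in the proof; it only explains why one cares about such an estimate in the application (since separately $\|\partial_x P_N(P_{N_3}g\cdot\tilde P_N h)\|_{L^2}$ would only give the weaker bound $N\|g\|_{L^\infty}\|h\|_{L^2}$, losing the derivative gain $N/N_3$).
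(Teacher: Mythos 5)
Your proof is correct and follows essentially the same route as the paper's. Both arguments pass to the physical-space kernel of the commutator, write $[\partial_x P_N, P_{N_3}g]\tilde P_N h(x)=\int \Phi_N(x-y)\bigl(P_{N_3}g(y)-P_{N_3}g(x)\bigr)\tilde P_N h(y)\,dy$ with $\Phi_N$ the Schwartz kernel of $\partial_x P_N$, extract the factor $|x-y|$ via the mean value theorem / FTC, apply Bernstein to get $\|(P_{N_3}g)'\|_{L^\infty}\lesssim N_3\|g\|_{L^\infty}$, and control the remaining kernel by the scale-invariant bound $\int |z|\,|\Phi_N(z)|\,dz\lesssim 1$. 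The only cosmetic differences are that you verify the Fourier-side identity $N_3\Lambda_{a_1}(g,h)=[\partial_x P_N,P_{N_3}g]\tilde P_N h$ explicitly (the paper takes it as read), and that you close with a pointwise majorization plus Young's inequality where the paper invokes Schur's test — for a kernel of the form $|z|\,|\Phi_N(z)|$ these are the same estimate. Your closing remark that $N_3\ll N$ is not needed for the estimate itself, but only for its usefulness (the gain $N/N_3$ over the trivial bound), is also accurate.
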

Proof can be found in Lemma 3.3 from \cite{koch2003local} but included in the appendix for completeness.
We now prove an important trilinear estimate, this will be in essence the main way of estimating the trilinear terms of the form of $\Gamma_a$ mentioned before. We should note, only the condition that $a \in L^\infty$ is required and not full acceptability with respect to \eqref{acceptable}.
\begin{prop}[Trilinear Estimate]\label{Pro1}
Let $ a\in L^\infty(\R^2) $ with $ \|a\|_{L^\infty} \lesssim 1 $ be acceptable.
For any triple $(N_1,N_2,N_3) \in (2^{\Z})^3$ with $N_1\sim N_2 \gtrsim  N_3>0 $ and any $ u_1, \, u_2,\, u_3\in X^{0,1/2,1}$, it holds 
\begin{equation}\label{triestX1}
\Bigl| \Gamma_a \Bigl(  P_{N_1} u_1, P_{N_2}  u_2 , P_{N_3}  u_3 \Bigr) \Bigr| 
\lesssim N_1^{ - \frac{9}{4}}N_3^{-\frac{3}{4}} \prod_{i=1}^3 \|P_{N_i} u_i \|_{X^{0,1/2,1}} \; .
\end{equation}
\end{prop}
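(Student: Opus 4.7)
My plan is to combine the physical-space representation~\eqref{Gamma} with the acceptability of $a$, apply H\"older and Strichartz estimates via the transference principle (Lemma~\ref{Transference}), and crucially exploit the dispersion localizations $Q_{L}$ in $X^{0,1/2,1}$ against the resonance bound from Lemma~\ref{resonance} to recover the remaining derivative decay.

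Starting from $\Gamma_a(u_1,u_2,u_3) = \iiint u_1 \cdot \Lambda_a(u_2,u_3)\,dxdydt$, a Cauchy--Schwarz in $(x,y,t)$ produces
\[
|\Gamma_a(P_{N_1}u_1,P_{N_2}u_2,P_{N_3}u_3)| \;\leq\; \|P_{N_1}u_1\|_{L^2_{xyt}} \|\Lambda_a(P_{N_2}u_2,P_{N_3}u_3)\|_{L^2_{xyt}}.
\]
The acceptability condition, applied pointwise in $(y,t)$ and combined with a suitable H\"older split in $(y,t)$, bounds the second factor by a product of mixed norms on $P_{N_2}u_2$ and $P_{N_3}u_3$. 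These mixed norms are then controlled via the Strichartz estimates~\eqref{Strichartz1} (with $\beta(q,r) = 2 - 4/r - 5/q$) and Bernstein in $x$ (converting $L^\infty_x$ to $L^2_x$ at cost $N_2^{1/2}$), after which transference delivers bounds of the form $N_i^{\beta_i}\|P_{N_i}u_i\|_{X^{0,1/2,1}}$. The natural choice is the Strichartz endpoint $(q,r)=(4,4)$ with $\beta(4,4) = -1/4$, applied to two of the factors.

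The main obstacle is that pure Strichartz yields at most $N^{-1/2}$ decay per factor, so three applications produce only $N^{-3/2}$---far short of the required total $N^{-3}$ when $N_1\sim N_3\sim N$. The missing gain must come from the resonance relation. Decomposing $P_{N_i}u_i = \sum_{L_i\geq 1}Q_{L_i}u_i$, the Fourier-support constraint forces $L_{\max}:=\max(L_1,L_2,L_3) \gtrsim |\Omega|$ where $\Omega = \sum_i \omega_\pm(\xi_i,\mu_i)$. In the KP-II case the $\xi$- and $\mu$-components of $\Omega$ compound, so by Lemma~\ref{resonance} one has $L_{\max}\gtrsim |\Omega_x|\sim N_1^4 N_3$. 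Treating (by symmetry) the case $L_1=L_{\max}$: using $\|Q_{L_1}u_1\|_{L^2_{xyt}} = L_1^{-1/2}\|Q_{L_1}u_1\|_{X^{0,1/2,1}}$ on the $u_1$ factor instead of incurring the $L_1^{1/2}$ weight, and summing the resulting geometric series over $L_1\gtrsim N_1^4 N_3$, produces an additional factor $L_{\max}^{-1/2}\lesssim N_1^{-2}N_3^{-1/2}$. Combined with the two $L^4_{xyt}$ Strichartz gains of $N_i^{-1/4}$, this yields exactly $N_1^{-2}N_3^{-1/2}\cdot N_1^{-1/4}N_3^{-1/4} = N_1^{-9/4}N_3^{-3/4}$. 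The KP-I case is the most delicate step, since $\Omega_x$ and $\Omega_y$ can cancel so that $|\Omega|$ is not guaranteed to dominate $N_1^4 N_3$; there either further $\mu$-frequency decomposition to localize the cancellation region, or a dedicated nonresonant Strichartz argument, will be required to close the same bound.
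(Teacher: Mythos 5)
Your treatment of the KP-II case captures the paper's key mechanism: decompose by modulation $Q_{L_i}$, invoke the resonance bound $L_{\max}\gtrsim N_1^4 N_3$ from Lemma~\ref{resonance}, trade the $L_{\max}^{1/2}$ weight of $X^{0,1/2,1}$ for $L^2_{txy}$ on the high-modulation factor, and use the $(q,r)=(4,4)$ Strichartz estimate plus transference on the other two. This exactly recovers $L_{\max}^{-1/2}\cdot N_1^{-1/4}N_3^{-1/4}\lesssim N_1^{-9/4}N_3^{-3/4}$, matching the paper.

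However, your \emph{setup} of the H\"older step does not quite close. You propose Cauchy--Schwarz in $(x,y,t)$ followed by acceptability pointwise in $(y,t)$ and Bernstein $L^\infty_x\to L^2_x$ at cost $N_2^{1/2}$. That Bernstein loss does not appear in your final numerology, and if you track it, it wipes out the $N_2^{-1/4}$ Strichartz gain on that factor (net loss $N_2^{1/4}$). The paper avoids this entirely: it works directly on the Fourier representation of $\Gamma_a$, bounds $|a|\leq\|a\|_{L^\infty}$, applies Plancherel, and then performs a three-way H\"older $L^2_{txy}\times L^4_{txy}\times L^4_{txy}$ on the convolution-type integral, obtaining $\|\mathcal{F}^{-1}|\hat{Q_{L_1}P_{N_1}u_1}|\|_{L^2}\|\mathcal{F}^{-1}|\hat{Q_{L_2}P_{N_2}u_2}|\|_{L^4}\|\mathcal{F}^{-1}|\hat{Q_{L_3}P_{N_3}u_3}|\|_{L^4}$ with no Bernstein. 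Notably, only $a\in L^\infty$ is used here --- the paper even remarks that full acceptability is not required for this proposition --- so invoking acceptability is an unnecessary detour that also introduces the loss.

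The KP-I case is a genuine gap in your argument. When $\delta=+1$ the two terms of the resonance function can cancel, so $L_{\max}\gtrsim N_1^4 N_3$ is no longer guaranteed and the remaining regime $L_1,L_2,L_3\ll N_1^4 N_3$ must be treated by entirely different means; gesturing at ``further $\mu$-frequency decomposition or a dedicated nonresonant Strichartz argument'' does not amount to a proof. The paper handles this by citing the bilinear estimate machinery of Ionescu--Kenig--Tataru \cite{ionescu2007global} and its generalized-dispersion version in Lemma 4.2 of Sanwal--Schippa \cite{sanwal2022low}; some such input is unavoidable, and your proposal would need to either reproduce that estimate or reference it explicitly.
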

\begin{proof}
Note that we need only show this estimate for $Q_{L_i}u_i$, for any triple $(L_1,L_2,L_3) \in (2^\N)^3$, as we may simply take out the sum in $L$, from $\sum_L Q_L \equiv 1$, to get the sum over $L$ required in the Besov type Bourgain space; that is we need only show 
\begin{equation}
A := \Bigl| \Gamma_a \Bigl(  Q_{L_1}P_{N_1} u_1, Q_{L_2}P_{N_2}  u_2 , Q_{L_3}P_{N_3}  u_3 \Bigr) \Bigr|
\lesssim N_1^{-\frac{9}{4}}N_3^{-\frac{3}{4}}   \prod_{i=1}^3 \|Q_{L_i}P_{N_i} u_i \|_{X^{0,1/2,1}} .
\end{equation}

For the KP-II case, we may also assume $\max(L_1,L_2,L_3) \ge 2^{-8} N_1^{4} N_3 $. By the definition of $Q_L$, we assume that $|\omega_-(\tau_i, \xi_i, \mu_i)| \sim L_i$, and by the resonance relation:
\[ \omega_-(\xi_1,\mu_1,\tau_1) + \omega_-(\tau_2, \xi_2, \mu_2) - \omega_-(\tau,\xi,\mu) = \xi^5 - \xi_1^5 - \xi_2^5 +\frac{(\mu\xi_1 - \mu_1\xi)^2}{\xi_1\xi_2\xi} , \]
having used that $\sum_{i=1}^3(\tau_i,\xi_i,\mu_i) =0$. Hence by Lemma \ref{resonance}
\[ \max(L_1,L_2,L_3) \gtrsim |\omega_{-}(\xi_1,\mu_1,\tau_1) + \omega_{-}(\tau_2, \xi_2, \mu_2) - \omega_{-}(\tau,\xi,\mu)| \gtrsim N_1^4 N_3.\]
Using symmetry, without loss of generality, we may assume that $L_1$ is the maximum. We make use of Plancherel, Hölder's and $a \in L^\infty(\R^2)$, to obtain
\begin{align*}
A & \leq \int_{\sum_{i=1}^3(\tau_i, \xi_i, \mu_i) = 0} |a(\xi_2,\xi_4)| \prod_{i=1}^3\big| \mathcal{F}[Q_{L_i} P_{N_i} u_i] (\tau_i, \xi_i, \mu_i)\big|\\
& \lesssim  \|\mathcal{F}^{-1} [\big|\mathcal{F}[Q_{L_1} P_{N_1} u_1]\big|] \|_{L^2_{txy}}  \|\mathcal{F}^{-1}[ \big|\mathcal{F}[Q_{L_2} P_{N_2} u_2]\big|] \|_{L^4_{txy}} \|\mathcal{F}^{-1}[ \big|\mathcal{F}[Q_{L_2} P_{N_2} u_2]\big|] \|_{L^4_{txy}}.
\end{align*}

Note, by \eqref{Strichartz1}, with $q = 4$ and $r = 4$, we have
\[ \|U(t)P_N \varphi\|_{L^{4}_t L^4_{xy}} \lesssim N^{-\frac{1}{4}}\|\Tilde{P}_N\varphi\|_{L^2_{xy}},\]
hence by Transference principle, Lemma \ref{Transference}, it follows
\[ \|P_N u\|_{L^{4}_t L^4_{xy}} \lesssim N^{-\frac{1}{4}}\|\Tilde{P}_Nu \|_{X^{0,1/2,1}}.\]
Meanwhile, it is of course true that
\begin{align*}
\adnorm{Q_{L_1}P_{N_1}u_1}_{L^2_t L^2_{xy}} \lesssim L_1^{-\frac{1}{2}}\adnorm{Q_{L_1}P_{N_1}u_1}_{X^{0,1/2,1}}.
\end{align*}
After applying the above, with $\mathcal{F}^{-1} \circ |\cdot| \circ \mathcal{F}$ falling out due to Plancherel, overall we have
\[ A \lesssim L_1^{-\frac{1}{2}}N_1^{-\frac{1}{4}}N_3^{-\frac{1}{4}} \prod_{i=1}^3 \| Q_{L_i} P_{N_i} u\|_{X^{0,1/2,1}} \lesssim N_1^{-\frac{9}{4}}N_3^{-\frac{3}{4}} \prod_{i=1}^3 \| Q_{L_i} P_{N_i} u\|_{X^{0,1/2,1}};\]
again the other cases for the maximum of $L_i$ are completely analogous.
For the KP-I case we need only consider the remaining case of $L_1,L_2,L_3 \ll N_1^4N_3$. The lower dispersion, $3$rd order KP-I equation version was first covered by Ionescu-Kenig-Tataru \cite{ionescu2007global}; for general dispersion, which covers our case, the  proof can be found with Lemma 4.2 of Sanwal-Schippa \cite{sanwal2022low}.
\end{proof}

\section{A-priori Estimates}
We move on to provide \textit{a-priori} estimates for solutions to $5$th order KP equations. The following are estimates for solutions of KP equations with general non-linearity $f$;
\begin{equation} \label{KPGneralNonLinear}
\del_x( \del_t u + \del_x^5 u + \del_x f) \pm \del^2_y u = 0.
\end{equation}
This is such that we can ultimately apply the following estimates to the difference and summation of two solutions of the KP equations. Note that \eqref{KPGneralNonLinear} has the Duhamel formulation
\begin{equation} \label{KPGneralNonLinearDuhamel}
u(t) = U(t)u_0 + \int_0^t U(t-s) \del_x f(s) d s.
\end{equation}

The following will find repeated use, as the $L^2_T L^\infty_{xy}$ norm will appear naturally after the use of Hölder's later, so we need a way to return to $L^2$ based Anisotropic Sobolev norm.
\begin{prop} \label{L2TimeEst}
Let $0 < T < 2$ and $u \in L^\infty_T H^{0+,0}$ be a solution to \eqref{KPGneralNonLinear},
then
\begin{equation} \label{L2TimeEstEq}
\adnorm{u}_{L^{2+}_TL^\infty_{xy}} \lesssim \adnorm{u}_{L^\infty_T H^{0+,0}} + \adnorm{J_x^{0+} f}_{L^{2+}_T L^1_{xy}}
\end{equation}

\end{prop}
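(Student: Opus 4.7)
The plan is to use the Duhamel formulation \eqref{KPGneralNonLinearDuhamel}, $u(t)=U(t)u_0-\tfrac12\int_0^tU(t-s)\partial_xf(s)\,ds$, and apply the Strichartz machinery of Section~3 to each piece. Triangle inequality reduces the problem to bounding the linear term $\|U(t)u_0\|_{L^{2+}_TL^\infty_{xy}}$ and the forcing term $\bigl\|\int_0^tU(t-s)\partial_xf(s)\,ds\bigr\|_{L^{2+}_TL^\infty_{xy}}$ separately by the right-hand side.

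For the linear term, I would invoke the Strichartz estimate \eqref{Strichartz1} with the admissible pair $(q,r)=(2+,\infty)$, for which $\beta(2+,\infty)=2-5/(2+)$ sits just above $-\tfrac12$. The scale-localized estimate \eqref{TruncatedStrichartz} gives $\|U(t)P_Nu_0\|_{L^{2+}_TL^\infty_{xy}}\lesssim N^{\beta(2+,\infty)}\|\Tilde P_Nu_0\|_{L^2_{xy}}$; triangle inequality in $N$ followed by a Cauchy--Schwarz split against the $\langle N\rangle^{0+}$ weight inherited from $\|u_0\|_{H^{0+,0}}$ sums the series, since the net exponent $2\beta(2+,\infty)-0+$ stays strictly negative. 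This yields $\|U(t)u_0\|_{L^{2+}_TL^\infty_{xy}}\lesssim\|u\|_{L^\infty_TH^{0+,0}}$.

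For the forcing, I would work frequency-by-frequency in the $x$-direction using the dispersive bound extracted within the Strichartz proof: taking $\theta=\tfrac12-\delta$ in the interpolated kernel estimate $|G_N(x,y,t)|\lesssim t^{-1+\delta}N^{-1+5\delta}$ and applying Young's convolution in time against the integrable kernel $(t-s)^{-1+\delta}$, combined with Bernstein $\|\partial_xP_Nf\|_{L^1_{xy}}\lesssim N\|P_Nf\|_{L^1_{xy}}$, yields $\bigl\|\int_0^tU(t-s)\partial_xP_Nf(s)\,ds\bigr\|_{L^{2+}_TL^\infty_{xy}}\lesssim T^\delta N^{5\delta}\|P_Nf\|_{L^{2+}_TL^1_{xy}}$ for each dyadic $N$, with $T^\delta$ bounded by the hypothesis $T<2$.

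The main obstacle is the subsequent dyadic summation in the $L^1_{xy}$-based norm: $L^1$ lacks a Littlewood--Paley square function identity, so one cannot pass $\sum_N$ inside the norm, and in particular the adjoint-Strichartz route via \eqref{Strichartz2} fails because the Hilbert transform appearing in $\partial_x=\mathcal{H}D_x$ is unbounded on $L^1$. I would resolve this by exploiting the Bessel-potential kernel estimate $\|K_{N,s}\|_{L^1_x}\lesssim N^{-s}$, proved by a direct scaling argument on $K_{N,s}=\mathcal{F}^{-1}[\langle\xi\rangle^{-s}\phi_N(\xi)]$, to obtain $\|P_Nf\|_{L^1_{xy}}\lesssim N^{-0+}\|J_x^{0+}f\|_{L^1_{xy}}$ for $N\geq 1$; assembling the dyadic bounds then gives $\sum_NN^{5\delta}\|P_Nf\|_{L^1_{xy}}\lesssim\|J_x^{0+}f\|_{L^1_{xy}}$ provided $\delta$ is chosen so that $5\delta<0+$, while the low frequencies $N\lesssim 1$ are trivially controlled since $N^{5\delta}\to 0$ as $N\to 0$ and $\|P_Nf\|_{L^1}\lesssim\|f\|_{L^1}$ uniformly.
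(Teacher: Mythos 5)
Your treatment of the forcing term is a valid and arguably more transparent alternative to the paper's route. Rather than invoking the retarded Strichartz estimate \eqref{Strichartz2} (which is proved by duality/$TT^*$ and needs both pairs admissible), you go back to the interpolated kernel bound $|G_N(x,y,t)|\lesssim t^{-1+\delta}N^{-1+5\delta}$ and close with Young's convolution in time, using $T<2$ to make $\|s^{-1+\delta}\mathbbm{1}_{[0,T]}\|_{L^1_s}\lesssim T^\delta$ finite. Your Bessel-kernel estimate $\|P_Nf\|_{L^1_{x}}\lesssim N^{-s}\|J_x^sf\|_{L^1_{x}}$ is exactly the right tool to sum the dyadic pieces in $L^1$ (where no Littlewood--Paley square function is available), and your observation that $\|P_Nf\|_{L^1}\lesssim\|f\|_{L^1}$ closes the $N\lesssim 1$ sum since $\sum_{N\leq 1}N^{5\delta}<\infty$. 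You also dispense with the paper's short-time decomposition $[0,T]=\bigcup_j I_j$ with $|I_j|\sim T/N$; for this proposition the decomposition is not essential (it is inherited from Proposition~\ref{tritri}, where it is genuinely needed), and your global-in-time Duhamel is fine.

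There is, however, a genuine gap in your linear-term argument at low frequencies. With $(q,r)=(2+,\infty)$, one has $\beta(2+,\infty)=2-\tfrac{5}{2+}$, which is strictly negative (close to $-\tfrac12$). The frequency-localized estimate $\|U(t)P_Nu_0\|_{L^{2+}_TL^\infty_{xy}}\lesssim N^{\beta(2+,\infty)}\|\Tilde P_Nu_0\|_{L^2_{xy}}$ therefore has a factor that grows as $N\to0$. Your Cauchy--Schwarz against the $\langle N\rangle^{0+}$ weight controls the dyadic sum over $N\geq 1$ precisely because $2\beta(2+,\infty)-0+ <0$, but for $N\leq 1$ the weight $\langle N\rangle^{0+}\sim 1$ is inert and $\sum_{N\leq 1}N^{2\beta(2+,\infty)}$ diverges, so the series does not sum. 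This is why the paper treats $N\lesssim 1$ separately with the admissible pair $(4-,\infty)$, for which $\beta(4-,\infty)\approx\tfrac34>0$: then $N^{\beta(4-,\infty)}$ is small and summable over $N\leq1$, and the resulting bound matches onto the $H^{0+,0}$ control via Cauchy--Schwarz. You need an analogous low-frequency patch; as written, the $N\to0$ tail of the linear term is unbounded.
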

\begin{proof}
For $N\geq 1$ begin by splitting up $[0,T]$ into $N$ intervals. Consider $J_N = \{I_j: 1 \leq j \leq N\}$ where $I_j = \big[(j-1)\frac{T}{N}, j\frac{T}{N}\big]$ then $\bigcup I_j = [0,T]$.
Using the Duhamel formulation \eqref{KPGneralNonLinearDuhamel}, splitting up the time interval into $J_N$, applying Strichartz estimates both \eqref{Strichartz1} and \eqref{Strichartz2} and using Hölder's to increase time integral exponent afterward; we obtain
\begin{align*}
\adnorm{P_Nu}_{L^{2+}_T L^{\infty}_{xy}}^{2+} &\lesssim \sum_{j \in J_N}(N^{-\frac{1}{2}+}\adnorm{P_N u}_{L^{\infty}_T L^2_{xy}})^{2+} + \sum_{j \in J_N}\left(N^{-1 +}  \adnorm{P_N f_x}_{L^{2-}_{I_j}L^1_{xy}}\right)^{2+} \\
& \lesssim N(N^{-\frac{1}{2}+}\adnorm{P_N u}_{L^\infty_T L^2_{xy}})^{2+} +  \left(N^{0+}\frac{T^{0+}}{N^{0+}}\adnorm{P_N f}_{L^{2+}_{T}L^1_{xy}} \right)^{2+} \\
& \lesssim (N^{0+}\adnorm{P_N u}_{L^\infty_T L^2_{xy}})^{2+} + T^{0+}\left(N^{0+}\adnorm{P_N f}_{L^{2+}_{T}L^1_{xy}} \right)^{2+}.
\end{align*}
For $N \leq 1$, follow the same as above, but avoid splitting up the time interval $[0,T]$. To avoid too much derivative gain, increase the time integral exponent;
\begin{align*}
\|P_N u \|_{L^{2+}_T L^{\infty}_{xy}} & \lesssim T^{4+}\big(\|U(t)P_N u_0 \|_{L^{4-}_T L^\infty_{xy}} + \bigg\|\int_0^t U(t-t')P_N f_x \bigg\|_{L^{4-}_T L^\infty_{xy}} \big) \\
& \lesssim N^{\frac34+}\|P_N u\|_{L^\infty_T L^2_{xy}} + N^{\frac{1}{4}+}\| P_N f_x \|_{L^{2-}_T L^1_{xy}} \\
& \lesssim \|P_N u\|_{L^\infty_T L^2_{xy}} + \| P_N f \|_{L^{2+}_T L^1_{xy}}.
\end{align*}
Summing over $N$, with the use of Littlewood-Paley theory then leads to the result \eqref{L2TimeEstEq}.
\end{proof}

We now move to prove the proposition which gives the full picture of how we aim to bound terms like \eqref{EnergyIntegrand}. 
Since Proposition \ref{Pro1} gives an estimate both in terms of Besov Bourgain space $X^{0,1/2,1}$ and makes use of the dispersion to provide derivative gain, we are left to reduce back to Anisotropic Sobolev space. To do so, we substitute in Duhamel formulations and make use of the following commonly known linear estimates. Since these estimates are well known, we omit proof. See Proposition 3.2 of \cite{ionescu2007global}. 
\begin{lem}\label{BesovBourgain2}
Let $\eta \in C_0^\infty(\R)$ be a smooth time cutoff. For $\varphi \in L^2_{xy}$ and any $K>0$ it holds 
\begin{equation}\label{linearS}
\| \eta(tK) U(t) \varphi \|_{X^{0,1/2,1}} \lesssim \|\varphi \|_{L^2_{xy}}.
\end{equation}
Furthermore for $ K\geq 1$:
\begin{equation}\label{linearD}
\| \eta(tK) \int_0^t U(t -t') f(t') \, dt' \|_{X^{0,1/2,1}} \lesssim K^{-1/2}  \| f\|_{L^2_{txy}}.
\end{equation}
\end{lem}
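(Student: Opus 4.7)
The proof plan is to work entirely on the space-time Fourier side, where both the linear propagator $U(t)$ and the Besov--Bourgain norm are diagonalized by the modulation variable $\sigma = \tau - w_\pm(\xi,\mu)$. The single analytic tool needed is that the time cutoff has Fourier transform $\mathcal{F}_t[\eta(tK)](\sigma)=K^{-1}\hat\eta(\sigma/K)$, which is essentially supported in $|\sigma|\lesssim K$ with Schwartz tails beyond.

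For \eqref{linearS}, direct computation gives
\[
\mathcal{F}[\eta(tK)U(t)\varphi](\tau,\xi,\mu)=K^{-1}\hat\eta\bigl(K^{-1}(\tau-w_\pm(\xi,\mu))\bigr)\hat\varphi(\xi,\mu),
\]
and after substituting $\sigma=\tau-w_\pm$ the variables decouple to yield
\[
\|Q_L[\eta(tK)U(t)\varphi]\|_{L^2_{txy}} = \|\varphi\|_{L^2_{xy}}\cdot K^{-1}\|\phi_L\hat\eta(\cdot/K)\|_{L^2_\sigma}.
\]
I would then split into the regime $L\le K$, bounding $|\hat\eta|$ by $\|\hat\eta\|_{L^\infty}$ for a contribution $K^{-1}L^{1/2}$, and $L>K$, using Schwartz decay $|\hat\eta(s)|\le C_N(1+|s|)^{-N}$ for a contribution $K^{-1}L^{1/2}(K/L)^N$. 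Multiplying by $L^{1/2}$ and summing over dyadic $L$ gives in each range a convergent geometric series bounded by $\|\varphi\|_{L^2_{xy}}$.

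For \eqref{linearD}, I would start from the Fourier-inversion identity
\[
\int_0^t U(t-s)f(s)ds=\frac{1}{2\pi}\int_\R e^{itw_\pm(\xi,\mu)}\cdot\frac{e^{it(\tau-w_\pm(\xi,\mu))}-1}{i(\tau-w_\pm(\xi,\mu))}\hat f(\tau,\xi,\mu)\,d\tau
\]
(in $(\xi,\mu)$-Fourier) and split the $\tau$-integral at $|\tau-w_\pm|=K^{-1}$. In the low-modulation region $|\tau-w_\pm|\le K^{-1}$, Taylor expansion of the fractional kernel rewrites the integral as $\sum_{k\ge 0}\tfrac{(it)^{k+1}}{(k+1)!}U(t)\varphi_k$ where $\widehat{\varphi_k}(\xi,\mu)=\int_{|\alpha|\le 1/K}\alpha^k\hat f(w_\pm+\alpha,\xi,\mu)\,d\alpha$; since $t^{k+1}\eta(tK)=K^{-k-1}\tilde\eta_k(tK)$ with $\tilde\eta_k(s)=s^{k+1}\eta(s)\in\mathcal{S}$, each summand falls under \eqref{linearS}, and Cauchy--Schwarz in $\alpha$ over a window of length $K^{-1}$ gives $\|\varphi_k\|_{L^2_{xy}}\lesssim K^{-k-1/2}\|f\|_{L^2_{txy}}$; the sum in $k$ then produces the $K^{-1/2}$ gain. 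In the high-modulation region I would dyadically decompose $f$ as $Q_M f$ with $M\gtrsim K^{-1}$: the $e^{itw_\pm}$ half of the numerator contributes a free solution $U(t)\psi_M$ with $\|\psi_M\|_{L^2_{xy}}\lesssim M^{-1/2}\|Q_M f\|_{L^2}$ (Cauchy--Schwarz against the weight $(\tau-w_\pm)^{-1}\phi_M$), while the $e^{it\tau}$ half has explicit space-time Fourier transform $\phi_M(\tau-w_\pm)(\tau-w_\pm)^{-1}\hat f$ to which the $K$-scale convolution from $\eta(tK)$ is applied; combining the two and summing over $M$ by Cauchy--Schwarz delivers $K^{-1/2}\|f\|_{L^2_{txy}}$.

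The main obstacle is the high-modulation bookkeeping in \eqref{linearD}. One must simultaneously track the $M^{-1}$ weight coming from $(\tau-w_\pm)^{-1}$, the $L^{1/2}$ Besov weight from $Q_L$, and the $K$-scale redistribution of modulation mass under convolution with $\hat\eta_K$. Neither the free-solution piece nor the explicit-Fourier piece admits a clean separate bound of size $K^{-1/2}\|f\|_{L^2}$; the sharp $K^{-1/2}$ gain emerges only after their partial cancellation is exploited, and is saturated at the transition scale $L\sim M\sim K^{-1}$ where the low- and high-modulation regimes are glued.
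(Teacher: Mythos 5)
The paper does not actually prove this lemma; it cites Proposition~3.2 of Ionescu--Kenig--Tataru \cite{ionescu2007global}, so your proof is being judged as a stand-alone argument. Your treatment of \eqref{linearS} is the standard one and is correct for $K\gtrsim 1$ (which is all the paper ever needs): the formula $\mathcal{F}[\eta(tK)U(t)\varphi]=K^{-1}\hat\eta(K^{-1}(\tau-w_\pm))\hat\varphi$ is right, the decoupling after $\sigma=\tau-w_\pm$ is right, and the two-regime sum over $L$ gives the bound.

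For \eqref{linearD}, however, the modulation threshold you choose is wrong, and this is fatal. You split at $|\tau-w_\pm|=K^{-1}$, but the natural scale is $|\tau-w_\pm|\sim K$: the cutoff $\eta(tK)$ lives on a time interval of width $K^{-1}$, so its Fourier transform $K^{-1}\hat\eta(\cdot/K)$ spreads modulation by $\sim K$, not $K^{-1}$. With your threshold the high-modulation free-solution piece fails numerically. You correctly get $\|\psi_M\|_{L^2_{xy}}\lesssim M^{-1/2}\|Q_Mf\|_{L^2}$, but summing over $M\gtrsim K^{-1}$ via Cauchy--Schwarz produces
\[
\Bigl(\sum_{M\gtrsim K^{-1}}M^{-1}\Bigr)^{1/2}\Bigl(\sum_M\|Q_Mf\|_{L^2}^2\Bigr)^{1/2}\lesssim K^{+1/2}\|f\|_{L^2_{txy}},
\]
since $\sum_{M\gtrsim K^{-1}}M^{-1}\sim K$, not $K^{-1}$. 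The same factor appears if one bounds $\|\psi\|_{L^2}$ directly by Cauchy--Schwarz against the weight $\int_{|\alpha|\geq K^{-1}}\alpha^{-2}d\alpha\sim K$. This is off by a full factor of $K$, and no cancellation with the explicit-Fourier piece can recover it, because that piece has the same size (its $X^{0,1/2,1}$ contribution is again $\sum_{M\gtrsim K^{-1}}M^{-1/2}\|Q_Mf\|\lesssim K^{1/2}\|f\|$). Your closing remark that the $K^{-1/2}$ gain ``emerges only after partial cancellation'' and is saturated at $L\sim M\sim K^{-1}$ is therefore not a subtle feature of the argument; it is a symptom of the misplaced threshold. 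With the split at $|\tau-w_\pm|\sim K$, both halves of the high-modulation piece give $K^{-1/2}\|f\|_{L^2}$ separately (now $\sum_{M\gtrsim K}M^{-1}\sim K^{-1}$), and in the low-modulation region $|\alpha|\leq K$ your Taylor-expansion argument goes through with $\|\varphi_k\|_{L^2_{xy}}\lesssim K^{k+1/2}\|f\|_{L^2}$ cancelling against the $K^{-k-1}$ from $t^{k+1}\eta(tK)=K^{-k-1}\tilde\eta_k(tK)$, giving $K^{-1/2}$ per term with $k!$-summable constants. So the architecture of your proof is sound and standard; the error is the single misplaced power of $K$ in the splitting scale, which reverses the sign of the exponent in the final bound.
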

The fact that these estimates depend on a local time cut-off can be abused to get better estimates by splitting up the time domain. Specifically we do so dependent on a power of the largest spatial frequency projection; helping balance the derivative loss from both linear propagator and integrand terms present in the Duhamel formulations.
In the following we highlight the fact that $a$ being acceptable is now required beyond just $a \in L^\infty$.
\begin{prop} \label{tritri}
Let $0 < T < 2$ and $N_1 \sim N_2 \gtrsim N_3$, with $N_1 \gtrsim 1$. For $f_i \in L^2([0,T]; L^2(\R^2))$, with $a \in L^\infty(\R^2)$ and $\Tilde{a}$ as defined in \eqref{roleswitching} both acceptable. Given $u_1,u_2,u_3 \in C([0,T]; L^2(\R^2))$ satisfying 
\begin{equation} \label{KPuiNonlinear}
\del_x(\del_tu_i + \del_x^5 u_i + \del_x f_i) \pm \del^2_y u_i = 0 
\end{equation}
on $
(0,T) \times \R^2$ for $i = 1,2,3$. Then 
\begin{multline} \label{trilinear}
\left| \int_0^T \int_{\R^2} P_{N_1}u_1(t)\Lambda_a(P_{N_2}u_2(t), P_{N_3}u_3(t)) dt \right| \\
\hspace{-4cm}\lesssim \prod_{i=1}^2 \left( T^\frac{1}{2}\adnorm{P_{N_i}u_i}_{L^2_T L^2_{xy}} +  T^{-\frac{1}{2}}\adnorm{P_{N_i}f_i}_{L^2_T L^2_{xy}} \right) 
\\ \hspace{2cm}\times \left( \adnorm{P_{N_3}u_3}_{L^\infty_T L^2_{xy}} + T^\frac{1}{2} \bigg(  \frac{N_3}{N}\bigg)^\frac{1}{2}\sup_{\substack{I \subset [0,T] \\ |I| \sim TN^{-1}}}  N_3^\frac{1}{2}\adnorm{ P_{N_3}f_{3}}_{L^2_I L^2_{xy}} \right)N_1^{-\frac{5}{4}}N_3^{-\frac{3}{4}} \\
+ T^\frac{1}{2}N_1^{-1} \prod_{i=1}^2 \bigg( \adnorm{P_{N_i} u_i} _{L^\infty_T L^2_{xy}} + T^\frac{1}{2}\adnorm{J_x^{0+}P_{N_i} f_i}_{L^{\infty-}_{T} L^1_{xy}} \bigg) \adnorm{P_{N_3}u_3}_{L^\infty_T L^2_{xy}}.
\end{multline}
\end{prop}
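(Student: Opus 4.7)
The approach is a hybrid of two arguments: a short-time $X^{s,b}$ decomposition of $[0,T]$ handles the interior of the interval and produces the first term of \eqref{trilinear}, while a direct boundary estimate using multilinear interpolation on the acceptability of $a$ handles the contribution from the sharp endpoints $t=0,T$ and produces the second term.

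For the interior, subdivide $[0,T]$ into $\sim N_1$ subintervals $I_j$ of length $\tau := T/N_1$, and choose a smooth partition of unity $\{\eta_j\}$ adapted to them. On each $I_j$, select $\tilde a_j\in I_j$ by the mean value theorem so that $\|u_i(\tilde a_j)\|_{L^2_{xy}}^2 \leq \tau^{-1}\|u_i\|_{L^2_{I_j}L^2_{xy}}^2$, apply the Duhamel formulation for \eqref{KPuiNonlinear} centered at $\tilde a_j$, and use Lemma \ref{BesovBourgain2} with $K = 1/\tau$ to bound
\[
\|\eta_j u_i\|_{X^{0,1/2,1}} \lesssim \|u_i(\tilde a_j)\|_{L^2_{xy}} + \tau^{1/2} N_i \|P_{N_i} f_i\|_{L^2_{\tilde I_j} L^2_{xy}},
\]
where $\tilde I_j$ is a slight enlargement of $I_j$. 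Proposition \ref{Pro1} applied on each subinterval yields the prefactor $N_1^{-9/4}N_3^{-3/4}$. Summing over $j$ by Cauchy--Schwarz in $\ell^2_j$ for the $u_1,u_2$ factors, with $\sum_j \|u_i(\tilde a_j)\|^2 \leq \tau^{-1}\|u_i\|_{L^2_T L^2}^2$ provided by the mean value bound and $\sum_j\|P_{N_i} f_i\|_{L^2_{\tilde I_j}L^2}^2 \lesssim \|P_{N_i} f_i\|_{L^2_T L^2}^2$ from finite overlap, while taking $\sup_j$ for the $u_3$ factor, gives a combined $N_1$-factor from the $\sim N_1$-many intervals and produces the first term of \eqref{trilinear} (with the appropriate weights on $\|u_i\|_{L^2_T L^2}$, $\|f_i\|_{L^2_T L^2}$, $\|u_3\|_{L^\infty_T L^2}$, and the $\sup_I \|f_3\|_{L^2_I L^2}$ term corresponding to subintervals of length $\sim T/N_1$).

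For the second term, the sharp cutoff $\chi_{[0,T]}$ at $t=0,T$ is incompatible with the Besov--Bourgain norm, so one bounds the boundary contribution directly. Acceptability of $a$ gives the pointwise-in-$(y,t)$ estimate $|\int u_1 \Lambda_a(u_2,u_3)\,dx|\leq \|u_1\|_{L^2_x}\|u_2\|_{L^\infty_x}\|u_3\|_{L^2_x}$; the symmetry \eqref{roleswitching} combined with acceptability of $\tilde a$ yields the dual bound $\leq \|u_1\|_{L^\infty_x}\|u_2\|_{L^2_x}\|u_3\|_{L^2_x}$. Multilinear interpolation (Lemma \ref{Multiinterpolation}) at $\theta = 1/2$ symmetrizes these to $\|u_1\|_{L^4_x}\|u_2\|_{L^4_x}\|u_3\|_{L^2_x}$. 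H\"older in $y$ (exponents $4,4,2$) upgrades this to $\|u_1\|_{L^4_{xy}}\|u_2\|_{L^4_{xy}}\|u_3\|_{L^2_{xy}}$, and a further H\"older in $t$ over a boundary interval of length $\tau$ produces the prefactor $\tau^{1/2}$ while placing $u_1,u_2$ in $L^{4+}_T L^4_{xy}$ and $u_3$ in $L^\infty_T L^2_{xy}$. The $L^4_T L^4_{xy}$ Strichartz estimate provides derivative gain $N_i^{-1/4}$ per factor, and a Duhamel argument modeled on Proposition \ref{L2TimeEst} converts the $L^{4+}_T L^4_{xy}$ norms of $u_1,u_2$ into the form $\|u_i\|_{L^\infty_T L^2_{xy}} + T^{1/2}\|J_x^{0+} f_i\|_{L^{\infty-}_T L^1_{xy}}$. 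Combining $N_1^{-1/2}$ (from two Strichartz factors) with $N_1^{-1/2}$ (from $\tau^{1/2} = (T/N_1)^{1/2}$) yields the net $N_1^{-1}$ gain and $T^{1/2}$ prefactor of the second term.

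\textbf{Main obstacle.} The delicate step is the boundary estimate. The sharp temporal cutoff at $t=0,T$ forces us outside the $X^{s,b}$ framework, and extracting the full $N_1^{-1}$ gain requires a careful balance of three tools: multilinear interpolation on the acceptability of $a$ and $\tilde a$ (exploiting the symmetry of $\Gamma_a$ to exchange $L^2_x$--$L^\infty_x$ for the $L^4_x$--$L^4_x$ H\"older structure), the $L^4_T L^4_{xy}$ Strichartz estimate (which is sharper than the $L^2_T L^\infty_{xy}$ estimate used in Guo--Molinet \cite{guo2024well}), and the short subinterval length $\tau$. Each ingredient contributes roughly a factor of $N_1^{-1/4}$ or $N_1^{-1/2}$, and their product must line up exactly to match the claimed $T^{1/2} N_1^{-1}$ decay.
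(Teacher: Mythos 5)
Your proposal is correct and follows essentially the same route as the paper's proof: the same partition of $[0,T]$ into $\sim N_1$ smooth subintervals with Duhamel centered at the $L^2$-minimizing point $c_{i,j}$, the same application of Proposition \ref{Pro1} together with the linear estimates of Lemma \ref{BesovBourgain2} for the interior, and the same multilinear interpolation between the $L^2_x$--$L^\infty_x$--$L^2_x$ and $L^\infty_x$--$L^2_x$--$L^2_x$ acceptability bounds (via the symmetry \eqref{roleswitching}) followed by the $(4,4)$ Strichartz estimate for the two boundary terms. The only cosmetic difference is that you apply the acceptability and interpolation pointwise in $(y,t)$ and then H\"older, while the paper interpolates the fully integrated $\Gamma_a$ bounds; the decay bookkeeping ($N_1^{-1/4}$ per Strichartz factor plus $(T/N_1)^{1/2}$ from the boundary interval length, matching $T^{1/2}N_1^{-1}$) is identical.
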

\begin{proof}
We split up $[0,T]$, depending on the highest frequency, into $N_1$ intervals $I_j$ such that $|I_j| \sim TN_1^{-1}$, with $J_{N_1} =\{ I_j : 1 \leq j \leq N_1 - 1\}$, $I_j = [\frac{T}{N_1}(j - \frac{1}{2}), \frac{T}{N_1}(j+\frac{1}{2}) ]$. Observe the slight difference compared to the way the time interval was split in Proposition \ref{L2TimeEst}. We require smooth time cut-offs as opposed to sharp, as we want to make use of Lemma \ref{BesovBourgain2} to get rid of Besov Bourgain norms. Let $\eta \in C^\infty_0(\R)$ with values in $\R_+$, such that $\supp \eta \subset [ -3/4 , 3/4 ]$ and $\eta \equiv 1$ on $[-1/4, 1/4]$ and $\eta + \eta(\cdot - 1) \equiv 1$ on $[1/4, 3/4]$; by construction
\[ \sum_{k \in \Z} \eta(\cdot - k) \equiv 1.\]
The following notation is used to denote translations and dilations of these smooth time cutoffs; $\eta_j: = \eta(\frac{N_1(\cdot - j)}{T})$ and $\Tilde{\eta}_j = \eta(\frac{4N_1(\cdot - j)}{T})$. It is apparent $\eta_j \Tilde{\eta}_j = \eta_j$, and $\supp \eta_j \subset I_j$. We use these smooth cut-offs to split up the characteristic of the time interval, that is 
\begin{equation} \label{timeSplit}\mathbbm{1}_{[0,T]} = \mathbbm{1}_{[0,T]}\eta_0 + \sum_{k = 1}^{N_1 - 1}\eta_k + \mathbbm{1}_{[0,T]}\eta_{N_1}. 
\end{equation}

A clear distinction arises between the boundaries of the time interval and its interior; it becomes necessary for a sharp cut-off at the boundary to avoid the time interval in the associated norms from growing. Each of these cases come with their own challenge. The boundary terms with sharp cut-offs do not behave well with the $X^{0,1/2,1}$ norm, but comprises of only two terms. The interior, which can be estimated with $X^{0,1/2,1}$ requires summation of $N_1$ terms. The plan will consist of substituting in the Duhamel formulations for \eqref{KPuiNonlinear}, after the splitting \eqref{timeSplit}. To do so it is useful to choose an optimal ``centre'' $c_{i,j}$ for the Duhamel formulation depending on which time cut-off we consider; these centres will satisfy
\begin{equation}
\begin{split}\label{Duhamel}
\eta_j P_{N_i}u_i(t) & = \eta_j U(t - c_{i,j})P_{N_i}u_i(c_{i,j}) + \eta_jP_{N_i}F_{i,j}(t) \\
& = \eta_j U(t - c_{i,j})P_{N_i}u_i(c_{i,j}) + \eta_jP_{N_i}\int_{c_{i,j}}^t U(t-s)\eta_j(s) \del_x f_{i}(s) ds. 
\end{split}
\end{equation}
Observe, we also add $\eta_j$ into the integral term to carry over after we apply \eqref{linearD}.
For each of the intervals $I_j$, we choose $c_{i,j} \in I_j$ such that $\adnorm{P_{N_i}u_i(t)}_{L^2_{xy}}$ attains its minimum on $I_j$. This offers us
\[ |I_j|\adnorm{P_{N_i}u_i(c_{i,j})}_{L^2_{xy}}^2 \leq \int_{I_j} \adnorm{P_{N_i}u_i(t)}_{L^2_{xy}}^2\] 
or written differently
\begin{equation} \label{c_ij}
\adnorm{P_{N_i}u_i(c_{i,j})}_{L^2_{xy}}^2 \leq \frac{1}{|I_j|}\adnorm{P_{N_i}u_i}_{L^2_{I_j}L^2_{xy}}^2 \lesssim \frac{N_1}{T}\adnorm{P_{N_i}u_i}_{L^2_{I_j}L^2_{xy}}^2 .
\end{equation}
We formally carry out the time splitting, into terms $A,B,C$, below, where $A,B$ are terms which consider the boundary, while C consists of the interior.
\begin{align*}
\int_0^T & \int_{\R^2} P_{N_1}u_1(t) \Lambda_a(P_{N_2}u_2(t), P_{N_3}u_3(t)) dt \\ 
& = \Gamma_a(\mathbbm{1}_{[0,T]}\eta_0 P_{N_1}u_1, \Tilde{\eta}_0P_{N_2}u_2 ,\Tilde{\eta}_0P_{N_3}u_3 )+ \Gamma_a(\mathbbm{1}_{[0,T]}\eta_{N_1}P_{N_1}u_1, \Tilde{\eta}_{N_1}P_{N_2}u_2 , \Tilde{\eta}_{N_1}P_{N_3}u_3 ) \\
& + \sum_{j=1}^{N_1 - 1} \Gamma_a(\eta_{j}P_{N_1}u_1, \Tilde{\eta}_{j}P_{N_2}u_2 , \Tilde{\eta}_{j}P_{N_3}u_3 ) = : A + B + C.
\end{align*}
Substituting in Duhamel formulation \eqref{Duhamel} for $u_1$ and $u_2$ around their respective centres and time cut-off:
\begin{align*}
C & = C_1 + C_{2,1} + C_{2,2} + C_3 \\
& := \sum_{j=1}^{N_1-1}\Gamma_a(\eta_j U(t - c_{1,j})P_{N_1}u_1(c_{1,j}), \tilde{\eta}_j U(t - c_{2,j})P_{N_2}u_2(c_{2,j}) , \Tilde{\eta}_{j}P_{N_3}u_3 ) \\
&+ \sum_{j=1}^{N_1-1}\Gamma_a(\eta_j U(t - c_{1,j})P_{N_1}u_1(c_{1,j}), \tilde{\eta}_j P_{N_2}F_{2,j}(t)  , \Tilde{\eta}_{j}P_{N_3}u_3 ) \\
& \hspace{5cm}+ \sum_{j=1}^{N_1-1}\Gamma_a(\eta_jP_{N_1}F_{1,j}(t), \tilde{\eta}_j U(t - c_{2,j})P_{N_2}u_i(c_{2,j}) , \Tilde{\eta}_{j}P_{N_3}u_3 ) \\
& + \sum_{j=1}^{N_1-1}\Gamma_a(\eta_jP_{N_1}F_{1,j}(t), \tilde{\eta}_j P_{N_1}F_{2,j}(t)  , \Tilde{\eta}_{j}P_{N_3}u_3 ).
\end{align*}
This is now the point at which we apply the trilinear estimate from Lemma \ref{Pro1}, the estimates \eqref{linearS} and \eqref{linearD} to reduce from Besov Bourgain space back to $L^2$. All of the terms $C_1, C_{2,1}, C_{2,2}$ and $C_3$ are dealt with in this manner.

$\bf{C_1\hspace{0.2cm}estimate}$:
The following is attained by applying Lemma \ref{Pro1}, and then \eqref{linearS} on both norms where $u_1$ and $u_2$ appear, with the third line following from \eqref{c_ij}, and after a simple Cauchy-Scwharz:
\begin{align*}
C_1 &\lesssim N_1^{-\frac{9}{4}}N_3^{-\frac{3}{4}} \sum_{j = 1}^{N_1 - 1}  \adnorm{\eta_j U(t - c_{1,j})P_{N_1}u_1(c_{1,j})}_{X^{0,1/2,1}} \adnorm{\tilde{\eta}_j U(t - c_{2,j})P_{N_2}u_2(c_{2,j})}_{X^{0,1/2,1}} \\
& \hspace{10cm}\times \adnorm{\Tilde{\eta}_{j}P_{N_3}u_3 }_{X^{0,1/2,1}} \\
& \lesssim N_1^{-\frac{9}{4}}N_3^{-\frac{3}{4}} \sup_{1 \leq j \leq N_1 -1}\{ \adnorm{\Tilde{\eta}_{j}P_{N_3}u_3 }_{X^{0,1/2,1}} \} \sum_{j = 1}^{N_1 - 1}  \adnorm{P_{N_1}u_1(c_{1,j})}_{L^2_{xy}} \adnorm{P_{N_2}u_2(c_{2,j})}_{L^2_{xy}} \\
& \lesssim N_1^{-\frac{9}{4}}N_3^{-\frac{3}{4}}\sup_{1 \leq j \leq N_1 -1} \{ \adnorm{\Tilde{\eta}_{j}P_{N_3}u_3 }_{X^{0,1/2,1}} \} \frac{N_1}{T} \sum_{j = 1}^{N_1 - 1}  \adnorm{P_{N_1}u_1}_{L^2_{I_j}L^2_{xy}} \adnorm{P_{N_2}u_2}_{L^2_{I_j}L^2_{xy}}\\
& \lesssim N_1^{-\frac{5}{4}}N_3^{-\frac{3}{4}}\sup_{1 \leq j \leq N_1 -1} \{ \adnorm{\Tilde{\eta}_{j}P_{N_3}u_3 }_{X^{0,1/2,1}} \}T^{-\frac{1}{2}}\adnorm{P_{N_1}u_1}_{L^2_{T}L^2_{xy}} T^{-\frac{1}{2}}\adnorm{P_{N_2}u_2}_{L^2_{T}L^2_{xy}}.
\end{align*}

$\bf{C_3 \hspace{0.2cm}estimate}$: Similar to $C_1$ we apply Lemma \ref{Pro1} but instead of using \eqref{linearS} and \eqref{c_ij}, we use \eqref{linearD} to imply $ \adnorm{\eta_j P_{N_i} F_{i,j}}_{X^{0,1/2,1}} \lesssim T^\frac{1}{2}N_1^{-\frac{1}{2}} \adnorm{\eta_j P_{N_i}\del_x f_i}_{L^2_{txy}} \lesssim T^\frac{1}{2}N_1^\frac{1}{2}\adnorm{\eta_j P_{N_i} f_i}_{L^2_{txy}}$. In this way,
\begin{align*}
C_3 &\lesssim  N_1^{-\frac{9}{4}}N_3^{-\frac{3}{4}} \sup_{1 \leq j \leq N_1 -1}\{ \adnorm{\Tilde{\eta}_{j}P_{N_3}u_3 }_{X^{0,1/2,1}} \} \sum_{j = 1}^{N_1 - 1}  \adnorm{\eta_j P_{N_1} F_{1,j}}_{X^{0,1/2,1}} \adnorm{\tilde{\eta}_jP_{N_2}F_{2,j}}_{X^{0,1/2,1}} \\
& \lesssim  N_1^{-\frac{5}{4}}N_3^{-\frac{3}{4}} \sup_{1 \leq j \leq N_1 -1}\{ \adnorm{\Tilde{\eta}_{j}P_{N_3}u_3 }_{X^{0,1/2,1}} \} T^\frac{1}{2} \adnorm{\eta_j P_{N_1} f_1}_{L^2_{Txy}} T^\frac{1}{2}\adnorm{\tilde{\eta}_j P_{N_2} f_2}_{L^2_{Txy}}.
\end{align*}

$\bf{C_2 \hspace{0.2cm}estimate}$: A combination of applying the method for $C_1$ and $C_3$ to the respective linear and integrand terms treats $C_2$. Below is a treatment of $C_{2,1}$ with terms $U(t-c_{1,j})u_1(c_{1,j})$ and $F_{2,j}$; of course $C_{2,2}$, the term with $F_{1,j}$ and $U(t-c_{2,j})u_2(c_{2,j})$ instead is completely analogous. 
\begin{align*}
C_{2,1} &\lesssim N_1^{-\frac{9}{4}}N_3^{-\frac{3}{4}}\sup_{1 \leq j \leq N_1 -1}\{ \adnorm{\Tilde{\eta}_{j}P_{N_3}u_3 }_{X^{0,1/2,1}} \}  \sum_{j = 1}^{N_1 - 1}  \adnorm{\eta_j U(t - c_{1,j})P_{N_1}u_1(c_{1,j})}_{X^{0,1/2,1}} \\
& \hspace{10cm} \times \adnorm{\tilde{\eta}_jP_{N_2}F_{2,j}}_{X^{0,1/2,1}} \\
&\lesssim N_1^{-\frac{9}{4}}N_3^{-\frac{3}{4}}\sup_{1 \leq j \leq N_1 -1}\{ \adnorm{\Tilde{\eta}_{j}P_{N_3}u_3 }_{X^{0,1/2,1}} \}  \sum_{j = 1}^{N_1 - 1} N_1^\frac{1}{2}T^{-\frac{1}{2}} \adnorm{P_{N_1}u_1}_{L^2_{I_j}L^2_{xy}}  \\
& \hspace{10cm} \times T^\frac{1}{2}N_1^{-\frac{1}{2}} N_1 \adnorm{\tilde{\eta}_j P_{N_2}f_2}_{L^2_{txy}} \\
& \lesssim N_1^{-\frac{5}{4}}N_3^{-\frac{3}{4}}\sup_{1 \leq j \leq N_1 -1}\{ \adnorm{\Tilde{\eta}_{j}P_{N_3}u_3 }_{X^{0,1/2,1}} \} \adnorm{P_{N_1}u_1}_{L^2_{Txy}}   \adnorm{\tilde{\eta}_j P_{N_2}f_2}_{L^2_{Txy}}.
\end{align*}
It remains to get a suitable bound on $\sup_{1 \leq j \leq N_1 -1}\{ \adnorm{\Tilde{\eta}_{j}P_{N_3}u_3 }_{X^{0,1/2,1}} \}$. By substituting in \eqref{Duhamel} for $u_3$ and applying \eqref{linearS} to deal with the linear propagator, we have
\begin{equation*} \label{Xestimate}
\sup_{1 \leq j \leq N_1 -1}  \adnorm{\Tilde{\eta}_{j}P_{N_3}u_3 }_{X^{0,1/2,1}}  \lesssim \adnorm{P_{N_3}u_3}_{L^\infty_T L^2_{xy}} 
+ \sup_{1 \leq j \leq N_1-1} N_3\left( \frac{T}{N_1} \right)^\frac{1}{2} \adnorm{P_{N_3}f_{3}}_{L^2_{I_j}L^2_{xy}}.
\end{equation*}
Overall we are left with
\begin{align} \label{CContribution}
|C| &\lesssim \prod_{i=1}^2 \left( T^\frac{1}{2}\adnorm{P_{N_i}u_i}_{L^2_T L^2_{xy}} + T^{-\frac{1}{2}}\adnorm{P_{N_i}f_i}_{L^2_T L^2_{xy}} \right) 
\\
& \hspace{3cm}
\times \left( \adnorm{ P_{N_3}u_3}_{L^\infty_T L^2_{xy}} + \left( \frac{T}{N_1} \right)^\frac{1}{2} \sup_{\substack{I \subset [0,T] \\ |I| \sim TN^{-1}}}N_3\adnorm{P_{N_3}f_{3}}_{L^2_I L^2_{xy}} \right) N_1^{-\frac{5}{4}}N_3^{-\frac{3}{4}}. \nonumber
\end{align} 
We now consider how to approach $A,B$. We now fully make use of $a, \Tilde{a}$ being acceptable, with \eqref{acceptable} and \eqref{roleswitching} implying both
\[ |\Gamma_a(u_1,u_2,u_3)| \lesssim \|u_1\|_{L^2_{Txy}} \|u_2\|_{L^\infty_{Txy}} \|u_3\|_{L^2_{Txy}}\]
and
\[ |\Gamma_a(u_1,u_2,u_3)| \lesssim \|u_1\|_{L^\infty_{Txy}} \|u_2\|_{L^2_{Txy}} \|u_3\|_{L^2_{Txy}}.\]
By a multi-linear interpolation, Lemma \ref{Multiinterpolation},  it follows that
\[ |\Gamma_a(u_1,u_2,u_3)| \lesssim \|u_1\|_{L^4_{Txy}}\|u_2\|_{L^4_{Txy}} \|u_3\|_{L^2_{Txy}}.\]
Within our context we hence have
\begin{align*}
|A| &= \left|\int_0^T  \int_{\R^2} \eta_0P_{N_1}u_1 \Lambda_a( \tilde{\eta_0}P_{N_2}u_2, \tilde{\eta}_0 P_{N_3}u_3 ) dt dx dy \right| \\
& \leq \adnorm{\eta_0 P_{N_1}u_1}_{L^4_{Txy}} \adnorm{\eta_0 P_{N_2}u_2}_{L^4_{Txy}}  \adnorm{\tilde{\eta}_0 P_{N_3}u_3}_{L^{2}_T L^2_{xy}}.
\end{align*} 
Again using \eqref{Duhamel} for $u_1, u_2$ and Strichartz estimates with admissible pairs $(4,4)$ and $(2+, \infty)$, one gets
\begin{align} \label{L4Est}
\adnorm{P_{N_1}u_1 }_{L^4_{I_0}L^4_{xy}} &\lesssim \adnorm{P_{N_1} U(t)u_1(0)}_{L^4_{I_0} L^4_{xy}} + \adnorm{  \int_0^t U(t-s)P_{N_1}\del_x f_1 }_{L^4_{I_0} L^4_{xy}} \nonumber\\ 
& \lesssim  N_1^{-\frac{1}{4}}\adnorm{P_{N_1} u_1} _{L^\infty_T L^2_{xy}} + N_1^{\frac{1}{4}+}\adnorm{P_{N_1} f_1}_{L^{2-}_{I_0} L^{1}_{xy}} \nonumber\\
& \lesssim  N_1^{-\frac{1}{4}}\adnorm{P_{N_1} u_1} _{L^\infty_T L^2_{xy}} + |I_0|^{\frac{1}{2}}N_1^{\frac{1}{4}+}\adnorm{P_{N_1} f_1}_{L^{\infty-}_{I_0} L^{1}_{xy}} \nonumber\\
& \lesssim  N_1^{-\frac{1}{4}}\adnorm{P_{N_1} u_1} _{L^{\infty}_T L^2_{xy}} + T^{\frac{1}{2}+}N_1^{-\frac{1}{4}+}\adnorm{P_{N_1} f_1}_{L^{\infty-}_{I_0} L^{1}_{xy}}. 
\end{align} 
Using the same argument for $|B|$; we have
\begin{align*}
|A|, |B| \lesssim N_1^{-\frac{1}{2}} \prod_{i=1}^2 \bigg( \adnorm{P_{N_i} u_i} _{L^\infty_T L^2_{xy}} + T^{\frac{1}{2}+}\adnorm{J_x^{0+}P_{N_i} f_i}_{L^{\infty-}_{I_0} L^{1}_{xy}}  \bigg) \bigg( \frac{T}{N_1} \bigg)^\frac{1}{2}\adnorm{P_{N_3}u_3}_{L^{\infty}_T L^2_{xy}}.
\end{align*}
Combining with \eqref{CContribution} we conclude the desired result.
\end{proof}
The requirement that both of $a$ and $\Tilde{a}$ are acceptable as in \eqref{acceptable} is rather strong. As it happens, the only convolution multiplier we use that does not satisfy this is $a_1$ in Lemma \ref{Commutator}; for this we need a minor alteration to Proposition \ref{tritri}.
\begin{prop} \label{tritriException}
Let $0 < T < 2$ and $N_1 \sim N_2 \gtrsim N_3$, with $N_1 \gtrsim 1$. For $f_i \in L^2([0,T]; L^2(\R^2))$, with $a_1$ as defined in Lemma \ref{Commutator}. Given $u_1,u_2,u_3 \in C([0,T]; L^2(\R^2))$ satisfying 
\[ \del_x(\del_tu_i + \del_x^5 u_i + \del_x f_i) \pm \del^2_y u_i = 0 \]
on $
(0,T) \times \R^2$ for $i = 1,2,3$. Then 
\begin{multline} \label{trilinearEX}
\left| \int_0^T \int_{\R^2} \Lambda_{a_1}(P_{N_3}u_3(t), P_{N_2}u_2(t))P_{N_1}u_1(t) dt \right| \\
\hspace{-4cm}\lesssim \prod_{i=1}^2 \left( T^\frac{1}{2}\adnorm{P_{N_i}u_i}_{L^2_T L^2_{xy}} +  T^{-\frac{1}{2}}\adnorm{P_{N_i}f_i}_{L^2_T L^2_{xy}} \right) 
\\ \hspace{2cm}\times \left( \adnorm{u_3}_{L^\infty_T L^2_{xy}} + T^\frac{1}{2} \bigg(  \frac{N_3}{N}\bigg)^\frac{1}{2}\sup_{\substack{I \subset [0,T] \\ |I| \sim TN^{-1}}}  N_3^\frac{1}{2}\adnorm{ P_{N_3}f_{3}}_{L^2_I L^2_{xy}} \right)N_1^{-\frac{5}{4}}N_3^{-\frac{3}{4}} \\
+ T^\frac{1}{2}N_1^{-\frac{1}{4}}N_3^{-\frac{3}{4}} \prod_{i=2}^3 \bigg( \adnorm{P_{N_i} u_i} _{L^\infty_T L^2_{xy}} + T^\frac{1}{2}\adnorm{J_x^{0+}P_{N_i} f_i}_{L^{\infty-}_{T} L^1_{xy}} \bigg) \adnorm{P_{N_1}u_1}_{L^\infty_T L^2_{xy}}.
\end{multline}
\end{prop}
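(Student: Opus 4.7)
My plan follows the template of the proof of Proposition~\ref{tritri} almost verbatim; the only genuine deviation occurs in the treatment of the boundary pieces. I carry out the same time-splitting
\[
\int_0^T\!\!\int_{\R^2}\Lambda_{a_1}(P_{N_3}u_3(t),P_{N_2}u_2(t))P_{N_1}u_1(t)\,dx\,dy\,dt = A+B+C,
\]
based on $\mathbbm{1}_{[0,T]}=\mathbbm{1}_{[0,T]}\eta_0+\sum_{k=1}^{N_1-1}\eta_k+\mathbbm{1}_{[0,T]}\eta_{N_1}$, where $A,B$ are the sharp boundary contributions on $I_0,I_{N_1}$ and $C$ is the interior sum over $j=1,\dots,N_1-1$.

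For the interior term $C$, I would repeat the argument of Proposition~\ref{tritri} line by line: expand $u_1,u_2$ in Duhamel form about their minimising centres $c_{i,j}$, apply the trilinear estimate Proposition~\ref{Pro1} (whose sole requirement on the multiplier is that $a_1$ itself be acceptable, which is precisely Lemma~\ref{Commutator}), and convert from Besov--Bourgain back to $L^2$-based norms via \eqref{linearS}--\eqref{linearD}. This reproduces the first line of \eqref{trilinearEX} with the prefactor $N_1^{-5/4}N_3^{-3/4}$.

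For the boundary pieces $A,B$, the proof of Proposition~\ref{tritri} used multilinear interpolation (Lemma~\ref{Multiinterpolation}) between the two bounds coming from acceptability of $a$ and $\tilde a$; since only $a_1$ is acceptable in our setting this step must be replaced. I would apply acceptability of $a_1$ fiber-wise in $y$ to obtain
\[
\|\Lambda_{a_1}(P_{N_3}u_3,P_{N_2}u_2)\|_{L^2_{xy}} \lesssim \|P_{N_3}u_3\|_{L^\infty_{xy}}\|P_{N_2}u_2\|_{L^2_{xy}},
\]
pair with $P_{N_1}u_1\in L^2_{xy}$ by Cauchy--Schwarz, and distribute the time integral by Hölder on the short interval $I_0$ of length $T/N_1$. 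The low-frequency factor $P_{N_3}u_3$ goes into $L^{2+}_{I_0}L^\infty_{xy}$ via the admissible pair $(2+,\infty)$ Strichartz estimate, yielding $N_3^{-1/2+}$ on the linear part and, through \eqref{Strichartz2}, the $J_x^{0+}f_3$ correction on the Duhamel part; the high-frequency factor $P_{N_2}u_2$ goes into $L^4_{I_0}L^4_{xy}$ via \eqref{L4Est}, delivering $N_1^{-1/4}$ and the $J_x^{0+}f_2$ correction; while $P_{N_1}u_1$ stays in $L^\infty_T L^2_{xy}$. Collecting exponents, using $|I_0|\sim T/N_1$ to extract the $T^{1/2}$ factor, and using $N_3\le N_1$ to absorb the remaining $\varepsilon$-losses into the stated $N_1^{-1/4}N_3^{-3/4}$ prefactor yields the second term of \eqref{trilinearEX}; the bound for $B$ follows by an identical argument on $I_{N_1}$.

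The main obstacle is the Hölder bookkeeping at the boundary. Acceptability of $a_1$ rigidly forces the $L^\infty_x$ slot onto the low-frequency factor $P_{N_3}u_3$, so the symmetric $L^4\cdot L^4\cdot L^2$ partition exploited in the proof of Proposition~\ref{tritri} is no longer available. One must balance a $(2+,\infty)$ Strichartz pair on $u_3$, a $(4,4)$ Strichartz pair on $u_2$, and the short-interval factor $(T/N_1)^{1/2+}$ against each other so that the three target outputs --- the $N_1^{-1/4}$ gain, the $N_3^{-3/4}$ gain, and the two $J_x^{0+}$ corrections in $L^{\infty-}_T L^1_{xy}$ --- emerge together, which is only possible after appealing to $N_3\le N_1$ to bury the spare $\varepsilon$'s.
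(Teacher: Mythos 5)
Your treatment of the interior term $C$ is identical to the paper's and is fine: Proposition~\ref{Pro1} is applied with $a=a_1$ (which only needs $a\in L^\infty$ there), and \eqref{linearS}--\eqref{linearD} reduce the Besov--Bourgain norms to $L^2$-based ones. The genuine deviation, as you say, is at the boundary terms $A,B$, and here your plan has a gap.

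After applying acceptability of $a_1$ fiber-wise in $y$ and Cauchy--Schwarz, you are stuck with the spatial partition
\[
|A|\lesssim \int_{I_0}\|P_{N_1}u_1(t)\|_{L^2_{xy}}\,\|P_{N_3}u_3(t)\|_{L^\infty_{xy}}\,\|P_{N_2}u_2(t)\|_{L^2_{xy}}\,dt,
\]
in which $P_{N_2}u_2$ sits rigidly in $L^2_{xy}$. You then propose to place $P_{N_2}u_2$ in $L^4_{I_0}L^4_{xy}$ via \eqref{L4Est} to extract $N_1^{-1/4}$, but the spatial exponent on $u_2$ is $2$, not $4$, so \eqref{L4Est} does not apply. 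The only Strichartz pairs with $r=2$ are $(\infty,2)$, which gives no derivative gain; Hölder cannot lower $L^2_{xy}$ to $L^4_{xy}$; and no alternative $L^p$ acceptability bound for $a_1$ has been established. As written, the boundary estimate does not produce the $N_1^{-1/4}$ factor and hence does not close.

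The paper circumvents this by noticing that $\tilde a_1$, while not acceptable in the strict sense of \eqref{acceptable}, still satisfies a quantified (lossy) version via Bernstein,
\[
\|\Lambda_{\tilde a_1}(u,v)\|_{L^2_x}\lesssim \frac{N}{N_3}\,\|u\|_{L^\infty_x}\|v\|_{L^2_x},
\]
which yields a second bound on $|A|$ with the $L^\infty_x$ slot moved onto $P_{N_2}u_2$ at the cost of a factor $N_1/N_3$. Interpolating this against the $a_1$-acceptability bound by Lemma~\ref{Multiinterpolation} restores the symmetric $L^2\cdot L^4\cdot L^4$ partition on $(u_1,u_2,u_3)$ with an extra weight $(N_1/N_3)^{1/2}$; one then applies \eqref{L4Est} to both $L^4$ factors and Hölder in time to the $L^2$ factor, and the weight precisely shifts the $N_1^{-1/2}$ from Proposition~\ref{tritri} to the asymmetric $N_1^{-1/4}N_3^{-3/4}$ prefactor here. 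Your interpolation-free route cannot recover this combination of gains; to repair the proof you need the $\tilde a_1$ Bernstein bound and the interpolation step.
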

We bring attention to the fact that in the latter product, the roles of $P_{N_1}u_1$ and $P_{N_3}u_3$ have swapped.
\begin{proof}
Proof is exactly as in the proof of Proposition \ref{tritri} up until estimating components $A,B$. We can no longer rely on $\Tilde{a}_1$ being acceptable in its current form. Upon a further analysis
\[ a_1(\xi_1, \xi_2) : =N_3^{-1}\phi_{N_3}(\xi_1)\tilde \phi_N(\xi_2)[\phi_N(\xi_1+\xi_2)(\xi_1+\xi_2)-\phi_N(\xi_2)\xi_2].\]
So the specific form of $\Tilde{a}_1$ can be found to be
\[ \Tilde{a}_1(\xi_1, \xi_2) = a_1(-\xi_1 - \xi_2, \xi_1) = -N_3^{-1}\phi_{N_3}(\xi_1 + \xi_2)\Tilde{\phi}_N(\xi_1)[\phi_N(\xi_2)\xi_2 + \phi_N(\xi_1)\xi_1],\]
meaning its convolution multiplier has form 
\[ \Lambda_{\Tilde{a}_1}(u,v) = -N_3^{-1}P_{N_3}\big(\Tilde{P}_N u \cdot \del_x P_N v + \del_x \Tilde{P}_N P_N u \cdot v  \big).\]
Through Bernstein estimates
\[ \|\Lambda_{\Tilde{a}_1}(u,v) \|_{L^2_x} \lesssim \frac{N}{N_3}\|u\|_{L^\infty_x}\|v\|_{L^2_x},\]
and though this is not exactly acceptability, it is still sufficient for our usage. Making use of this and acceptability of $a_1$ both the following hold
\[ |A| \lesssim \|\eta_0 P_{N_1} u_1\|_{L^2_{Txy}} \|\Tilde{\eta}_0 P_{N_2}u_2\|_{L^2_{Txy}} \|\Tilde{\eta}_0 P_{N_3}u_3\|_{L^\infty_{Txy}},\]
\[ |A| \lesssim \frac{N_1}{N_3}\|\eta_0 P_{N_1} u_1\|_{L^2_{Txy}} \|\Tilde{\eta}_0 P_{N_2}u_2\|_{L^\infty_{Txy}} \|\Tilde{\eta}_0 P_{N_3}u_3\|_{L^2_{Txy}}.\]
Hence by a multi-linear interpolation, Lemma \ref{Multiinterpolation}, it follows that
\[ |A| \lesssim \bigg(\frac{N_1}{N_3} \bigg)^\frac{1}{2}\|\eta_0 P_{N_1} u_1\|_{L^2_{Txy}} \|\Tilde{\eta}_0 P_{N_2}u_2\|_{L^4_{Txy}} \|\Tilde{\eta}_0 P_{N_3}u_3\|_{L^4_{Txy}}.\]
Applying \eqref{L4Est} for both $L^4$ and Hölder in time for $L^2$, with the same argument for $B$, produces the result.
\end{proof}

\section{Estimate on Differences}
We now consider $u_1$ and $u_2$ as two solutions of the KP-equation \eqref{KPEquation}. Let $w = u_1 - u_2$ and $z = u_1 + u_2$, it then follows that $w$ and $z$ satisfy the following:
\begin{equation} \label{DifferenceEq} 
\del_x( \del_t w + \del_x^5 w + \del_x(wz)) \pm \del^2_y w = 0
\end{equation}
\begin{equation} \label{SumEq}
\del_x( \del_t z + \del_x^5 z + \del_x(u_1^2 + u_2^2)) \pm \del^2_y z = 0.
\end{equation}
\begin{lem} \label{NonlinearityEstimates}
Let $u_1, u_2 \in C_t H^{0+,0}$ satisfying the $5$th order KP equation \eqref{KPEquation}. Then for their difference $w$ and sum $z$, the expressions
\[f_{w} = wz\] 
\[f_z = u_1^2 + u_2^2\]
satisfy the following estimates
\[ \| J_x^{0+}f_w \|_{L^{\infty-}_T L^{1}_{xy}} \lesssim \adnorm{w}_{L^\infty_T H^{0+,0}} \bigg( \sum_{i=1}^2 \adnorm{u_i}_{L^\infty_T H^{0+,0}}\bigg) \]
\[ \| J_x^{0+}f_z \|_{L^{\infty-}_T L^{1}_{xy}} \lesssim  \bigg( \sum_{i=1}^2 \adnorm{u_i}_{L^\infty_T H^{0+,0}}^2\bigg).\]
\end{lem}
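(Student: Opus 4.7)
The plan is straightforward: the statement reduces entirely to a fractional product rule in $x$ combined with Cauchy--Schwarz in $y$, so no PDE structure of \eqref{KPEquation} is actually used. I will treat $f_w = wz$ first and then $f_z = u_1^2 + u_2^2$ in exactly the same way.

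Fix $(y,t)\in\R\times[0,T]$ and apply Lemma \ref{fractionalProd} in the $x$ variable with exponents $p=q=2$, $r=1$, and $\theta = 0+$. The hypothesis $\theta>\max(0,1/r-1) = \max(0,0) = 0$ is satisfied, so
\[
\adnorm{J_x^{0+}(wz)(\cdot,y,t)}_{L^1_x}\lesssim \adnorm{J_x^{0+}w(\cdot,y,t)}_{L^2_x}\adnorm{z(\cdot,y,t)}_{L^2_x}+\adnorm{w(\cdot,y,t)}_{L^2_x}\adnorm{J_x^{0+}z(\cdot,y,t)}_{L^2_x}.
\]
Integrating in $y$ and using Cauchy--Schwarz on each term yields
\[
\adnorm{J_x^{0+}(wz)(t)}_{L^1_{xy}}\lesssim \adnorm{J_x^{0+}w(t)}_{L^2_{xy}}\adnorm{z(t)}_{L^2_{xy}}+\adnorm{w(t)}_{L^2_{xy}}\adnorm{J_x^{0+}z(t)}_{L^2_{xy}},
\]
and each factor is dominated by the corresponding $H^{0+,0}$ norm.

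Taking the supremum in $t\in[0,T]$ and using the trivial inclusion $\adnorm{g}_{L^{\infty-}_T}\leq T^{0+}\adnorm{g}_{L^\infty_T}\lesssim \adnorm{g}_{L^\infty_T}$ (since $T<2$), I get
\[
\adnorm{J_x^{0+}f_w}_{L^{\infty-}_T L^1_{xy}}\lesssim \adnorm{w}_{L^\infty_T H^{0+,0}}\adnorm{z}_{L^\infty_T H^{0+,0}}\lesssim \adnorm{w}_{L^\infty_T H^{0+,0}}\sum_{i=1}^2 \adnorm{u_i}_{L^\infty_T H^{0+,0}},
\]
the last step following from $z = u_1+u_2$ and the triangle inequality. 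For $f_z$ I apply the same fractional product rule to $u_i^2$ for $i=1,2$ separately (with $f=g=u_i$) and sum, producing the bound $\sum_{i=1}^2 \adnorm{u_i}_{L^\infty_T H^{0+,0}}^2$.

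There is no real obstacle in this lemma: it is a pure product estimate. The only point of care is verifying that $r=1$ is admissible in Lemma \ref{fractionalProd}, which is why one needs any positive $\theta$ (here $0+$) rather than $\theta=0$. The pairing $p=q=2$ is forced by the fact that the right-hand side of the target inequality involves the anisotropic Sobolev norm $H^{0+,0}$, which is $L^2$-based in both $x$ and $y$; the Cauchy--Schwarz step in $y$ then assembles the $L^2_{xy}$ norms automatically.
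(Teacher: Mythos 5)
Your proof is correct and matches the paper's approach exactly: the paper simply cites the Fractional Product Rule (Lemma~\ref{fractionalProd}), and you have filled in the straightforward details of choosing $p=q=2$, $r=1$, $\theta=0+$, followed by Cauchy--Schwarz in $y$ and the triangle inequality for $z=u_1+u_2$. No gap.
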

\begin{proof}
Both follow quickly by the Fractional Product Rule, Lemma \ref{fractionalProd}.
\end{proof}
We finally use all the work done so far to prove the estimate on differences.
\begin{prop} \label{DifferenceEst} For any $s' > s > 0$, given two solutions $u_1,u_2 \in C_T H^{s,0}$ to the KP type equation with same initial data, their difference $w := u_1 - u_2 \in C_T H^{0+,0}$ satisfies the following
\[ \adnorm{w}_{L^\infty_T H^{s,0}}^2 \lesssim \adnorm{w}_{L^\infty_T H^{s,0}}^2\bigg(\sum_{i=1}^2 \|u_i\|_{L^\infty_T H^{s',0}}\big( 1 + \|u_i\|_{L^\infty_T H^{s',0}} \big)^6 \bigg). \]

\end{prop}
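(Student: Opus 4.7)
The plan is to run a frequency-localized energy argument on the difference equation \eqref{DifferenceEq}, routing the resulting trilinear terms through the short-time trilinear estimates of Propositions \ref{tritri} and \ref{tritriException}. Applying $P_N$ to \eqref{DifferenceEq} and pairing with $P_N w$, the identity \eqref{Energy} specialised to the nonlinearity $f_w := wz$ yields, since $w(0) \equiv 0$ from the shared initial data,
\[ \tfrac{1}{2}\|P_N w(t)\|_{L^2_{xy}}^2 = -\int_0^t \int_{\R^2} \partial_x P_N(wz) \cdot P_N w \, dx\,dy\,ds. \]
The entire proposition thus reduces to bounding this trilinear quantity uniformly in $t \in [0,T]$, after multiplying by $\langle N\rangle^{2s}$ and summing in $N$.

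Dyadically decompose $w$ and $z$ in the $x$-frequency; by the discussion after \eqref{EnergyIntegrand}, only triples $(N_1, N_2, N_3)$ with $N_1 \sim N_2 \gtrsim N_3$ contribute. In the regimes where the symbol $i(\xi_1+\xi_2)\tilde\phi_N(\xi_1+\xi_2)$ of $\partial_x P_N$ admits an acceptable transpose (namely, output-low with both inputs at $\sim N_1$, and output-high with $w$ at the low frequency), Proposition \ref{tritri} applies directly, producing the decisive derivative gain $N_1^{-5/4}N_3^{-3/4}$. In the remaining regime, output-high with $z$ at the low frequency $N_3 \ll N$, the transpose symbol has $L^\infty$-norm $\sim N/N_3$ and fails acceptability. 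Here invoke the commutator identity of Lemma \ref{Commutator},
\[ \partial_x P_N(P_{N_3} z \cdot \tilde P_N w) = P_{N_3} z \cdot \partial_x P_N w + N_3 \Lambda_{a_1}(P_{N_3}z, P_{N_2}w). \]
Pairing the first summand with $P_N w$ yields $\tfrac{1}{2} P_{N_3} z \cdot \partial_x (P_N w)^2$; integration by parts in $x$ produces $-\tfrac{1}{2}\int \partial_x(P_{N_3}z)(P_N w)^2$, which is controlled via Hölder, Bernstein and Proposition \ref{L2TimeEst}. The second summand is precisely the configuration of Proposition \ref{tritriException}.

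In every case the trilinear estimates produce $L^2_T L^2_{xy}$ norms of $P_{N_i} w, P_{N_i} z$ together with $L^2_T L^2_{xy}$ norms of $P_{N_i} f_w, P_{N_i} f_z$ and $L^{\infty-}_T L^1_{xy}$ norms of $J_x^{0+} f_w, J_x^{0+} f_z$. Convert the $L^2_T L^2$ norms of $u_i$ to $L^\infty_T L^2$ by Hölder in time (with a harmless $T^{1/2}$ factor), and dispatch the nonlinearity norms via Lemma \ref{NonlinearityEstimates}, upgrading the $H^{0+,0}$ factors there to $H^{s',0}$ using $s' > s > 0$. Multiplying by $\langle N\rangle^{2s}$ and summing: the weight $\langle N_1\rangle^{2s}$ is matched with the two copies of $P_{N_i} w$ to yield $\|w\|_{L^\infty_T H^{s,0}}^2$ via Cauchy–Schwarz, the decay $N_3^{-3/4}$ absorbs the sum in $N_3$ against the weighted $\|z\|_{H^{s',0}} \le \sum_i \|u_i\|_{H^{s',0}}$, and the gain $N_1^{-5/4}$ beats $\langle N_1\rangle^{2s}$ for any $s < 5/4$, certainly for $s > 0$. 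The polynomial factor $(1 + \|u_i\|_{H^{s',0}})^6$ arises when Lemma \ref{NonlinearityEstimates} is applied iteratively to $f_w$ and $f_z$ together with the chained $L^{2+}_T L^\infty_{xy}$ bound of Proposition \ref{L2TimeEst}.

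The \textbf{main obstacle} is the output-high case with $z$ at low frequency: the lack of transpose-acceptability of the natural symbol forbids the symmetric reduction built into Proposition \ref{tritri}. Extracting the integration-by-parts piece \emph{before} invoking the commutator is indispensable, since it decouples the two high-frequency copies of $w$; the residual commutator then fits exactly into the asymmetric bookkeeping of Proposition \ref{tritriException}, with $P_{N_3}z$ in the $u_3$-slot, $P_{N_2}w$ in the $u_2$-slot, and $P_{N_1}w$ outside. Matching these slots correctly, and verifying that the prefactor $N_3$ coming from the commutator is cleanly absorbed by the joint gain $N_1^{-5/4}N_3^{-3/4}$, is the most delicate accounting step of the proof.
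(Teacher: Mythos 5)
Your overall architecture tracks the paper: frequency-localized energy identity for $w$ with $w(0)=0$, restriction to $N_1\sim N_2\gtrsim N_3$ interactions, and routing each interaction through Propositions \ref{tritri} and \ref{tritriException} after the commutator/integration-by-parts split. The slot bookkeeping for the commutator residue ($P_{N_3}z$ in the low slot, the two copies of $w$ at high frequency, via Proposition \ref{tritriException}) is right.

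However, there is a genuine gap in how you dispatch the integration-by-parts piece. You claim
\[
-\tfrac{1}{2}\int_0^T\!\!\int_{\R^2}\partial_x(P_{N_3}z)\,(P_N w)^2
\]
is \emph{``controlled via H\"older, Bernstein and Proposition \ref{L2TimeEst}.''} This does not close. Those tools produce at best a bound of the shape $N_3\,\|P_{N_3}z\|_{L^{2+}_T L^\infty_{xy}}\|P_N w\|^2_{L^\infty_T L^2_{xy}}$, and the $L^{2+}_T L^\infty_{xy}$ estimate of Proposition \ref{L2TimeEst} only recovers $\|P_{N_3}z\|_{L^{2+}_T L^\infty_{xy}}\lesssim N_3^{0+}\|P_{N_3}z\|_{L^\infty_T L^2_{xy}}+\cdots$. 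After weighting by $\langle N\rangle^{2s}$, you are left needing to sum $\sum_{N_3\ll N}N_3^{1+}\|P_{N_3}z\|_{L^\infty_T L^2_{xy}}\lesssim N^{1+-s'}\|z\|_{L^\infty_T H^{s',0}}$, which diverges in $N$ for any $s'<1$ --- i.e., exactly the regime $s'>s>0$ with $s$ small that the proposition is about. What's missing is the dispersive smoothing $N^{-5/4}N_3^{-3/4}$. The paper obtains it by observing that after integration by parts this term is precisely $-\tfrac{1}{2}N_3\,\Gamma_{a_2}(P_N w,P_{N_3}z,P_N w)$ with $a_2(\xi_1,\xi_2)=\tfrac{\xi_1}{N_3}\tilde\phi_{N_3}(\xi_1)$, and both $a_2$ and $\tilde a_2$ are acceptable, so Proposition \ref{tritri} applies \emph{to this piece as well}, delivering the gain $N^{-5/4}N_3^{-3/4}$ that makes the sum converge for arbitrarily small $s$. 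Without this, your argument would require $z\in H^{1+,0}$, defeating the purpose of the result.

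A smaller omission: the paper treats the low-frequency output $N\lesssim 1$ separately (via Duhamel and the Strichartz estimate \eqref{Strichartz2} applied directly to $P_{\lesssim 1}w$), since Proposition \ref{tritri} requires $N_1\gtrsim 1$ and the energy route does not apply there. You fold everything into the energy identity, which silently assumes $N\gtrsim 1$.
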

\begin{proof}
For low frequencies, $N \lesssim 1$, using the Duhamel formulation of \eqref{DifferenceEq}, and applying Strichartz estimates \eqref{Strichartz2}
\begin{equation} \label{wStrichartzEst}
\adnorm{P_{N}w}_{L^\infty_T H^{s,0}} \lesssim \adnorm{P_N w(0)}_{L^\infty_T H^{s,0}} + \abrac{N}^{2s}N^{\frac{1}{2}+} \adnorm{ P_{N}(wz)}_{L^{2-}_T L^1_{xy}}.
\end{equation}
We may split up components in the following way:
\begin{equation} \label{componentSplit}
P_N(wz) = P_N(\tilde{P}_{N}w P_{\ll N}z) + P_N( P_{\ll N}w\tilde{P}_{N}z) + \sum_{N \lesssim N_1}P_N(\tilde{P}_{N_1}w P_{N_1}z).
\end{equation}
It then follows that
\begin{align*} 
\adnorm{P_N(wz)}_{L^{2-}_T L^1_{xy}} & \lesssim T^\frac{1}{2}(\| \tilde{P}_N w \|_{L^\infty_T L^2_{xy}} \adnorm{P_{\ll N}z}_{L^\infty_T L^2_{xy}} + \adnorm{P_{\ll N}w}_{L^\infty_T L^2_{xy}} \|\tilde{P}_N z\| _{L^\infty_T L^2_{xy}}) \\
& \hspace{3cm} + T^{\frac{1}{2}+} \sum_{N \lesssim N_1} \|\tilde{P}_{N_1} w\|_{L^\infty_T L^2_{xy}}\adnorm{P_{N_1} z}_{L^\infty_T L^2_{xy}},
\end{align*}
and hence we see from \eqref{wStrichartzEst}
\[ \adnorm{P_{\lesssim 1 }w}_{L^\infty_T H^{s,0}} \lesssim \adnorm{P_{\lesssim 1} w(0)}_{L^\infty_T H^{s,0}} + T^\frac{1}{2}\adnorm{z}_{L^\infty_T L^2_{xy}} \adnorm{w}_{L^\infty_T L^2_{xy}}. \]
For high frequencies, we require the use of a similar energy estimate \eqref{Energy}, which in the context of $w$ becomes
\[ \adnorm{P_{N}w}_{L^\infty_T H^{s,0}}^2 \lesssim \adnorm{P_N w(0)}_{L^\infty_T H^{s,0}}^2  + \abrac{N}^{2s} \left| \int_0^T \int_{\R^2} P_N(w) \del_{x}P_N(wz) dx dy dt\right|.\]
Substituting in \eqref{componentSplit} again, we are left with 3 terms to estimate.
\[ A_1: = \sum_{\substack{0 < N_1 \ll N \\ N \gtrsim 1}} \abrac{N}^{2s} \left| \int_0^T \int_{\R^2} P_N(w) \del_{x}P_N(\tilde{P}_{N}w P_{N_1}z) dx dy dt \right|, \]
\[ A_2: = \sum_{\substack{0 < N_1 \ll N \\ N \gtrsim 1}}\abrac{N}^{2s} \left| \int_0^T \int_{\R^2} P_N(w) \del_x P_N( P_{N_1}w\tilde{P}_{N}z) dx dy dt \right|, \]
\[  A_3 := \sum_{1 \lesssim N \lesssim N_1 } \abrac{N}^{2s}  \left| \int_0^T \int_{\R^2} P_N(w) \del_x P_N(\tilde{P}_{N_1}w P_{N_1}z) dx dy dt \right|.\]
Consider first $A_1$
\begin{align}
A_1 =& \sum_{\substack{0 < N_1 \ll N \\ N \gtrsim 1}} \langle  N\rangle^{2s}\int_0^T  \int_{\R^2} \partial_x P_N(\tilde{P}_{N}w P_{ N_1} z ) P_N w \, dtdxdy \nonumber\\
 =&  \sum_{\substack{0 < N_1 \ll N \\ N \gtrsim 1}}  \langle  N\rangle^{2s}\Bigl( -\frac{1}{2}\int_0^T \int_{\R^2} \partial_x P_{N_1}z  (P_N w)^2\, dtdxdy \nonumber\\
 & \hspace{3cm}+\int_0^T \int_{\R^2} [\partial_x P_N, P_{N_1}z] \tilde{P}_N w \cdot P_N w  dtdxdy\Bigr) \\
 = &  \sum_{\substack{0 < N_1 \ll N \\ N \gtrsim 1}}   \langle  N\rangle^{2s} N_1 \int_0^T \int_{\R^2} \Lambda_{a_1}(P_{N_1}z, \tilde{P}_N w) \cdot P_N w  \, dtdxdy \label{intep} \nonumber\\
 & \hspace{3cm}- \frac{1}{2}\sum_{\substack{0 < N_1 \ll N \\ N \gtrsim 1}}  \langle  N\rangle^{2s} N_1 \int_0^T \int_{\R^2} \Lambda_{a_2}(P_{N_1}z, P_N w) \cdot P_N w  \, dtdxdy, \nonumber
\end{align}
where 
\[  a_1(\xi_1,\xi_2)=N_1^{-1}\phi_{N_1}(\xi_1)\tilde \phi_N(\xi_2)[\phi_N(\xi_1+\xi_2)(\xi_1+\xi_2)-\phi_N(\xi_2)\xi_2],\]
is our known commutator from Lemma \ref{Commutator}, and
\[ a_2(\xi_1, \xi_2) = \frac{\xi_1}{N_1}\tilde{\phi}_{N_1}(\xi_1).\]
Clearly $a_2$ and $\Tilde{a}_2$ are acceptable with respect to \eqref{acceptable}. Hence for $A_1$ it is sufficient to estimate a term of the form
\[ \sum_{\substack{0 < N_1 \ll N \\ N \gtrsim 1}} \abrac{N}^{2s} N_1 \int_0^T \int_{\R^2} \Lambda_a( P_{N_1}z, \tilde{P}_N w ) \cdot P_N w dt dx dy\]
for either both $a$ and $\Tilde{a}$ acceptable or just $a = a_1$.
Indeed by Proposition \ref{tritri} and \ref{tritriException} it follows that
\begin{align}
|A_1| &\lesssim \sum_{\substack{0 < N_1 \ll N \\ N \gtrsim 1}}\abrac{N}^{2s}N_1 \bigg(  \left( T\adnorm{P_{N}w}_{L^2_T L^2_{xy}}^2 +  T^{-1}\adnorm{P_{N}f_w}_{L^2_T L^2_{xy}}^2 \right) 
\nonumber \\
&\hspace{2cm}\times \left( \adnorm{P_{N_1}z}_{L^\infty_T L^2_{xy}} + T^\frac{1}{2} \bigg(  \frac{N_1}{N}\bigg)^\frac{1}{2}\sup_{\substack{I \subset [0,T] \\ |I| \sim TN^{-1}}}  N_1^\frac{1}{2}\adnorm{ P_{N_1}f_{z}}_{L^2_I L^2_{xy}} \right)N^{-\frac{5}{4}}N_1^{-\frac{3}{4}} \label{A1fwfzInterior}\\
& \hspace{2cm}+ T^\frac{1}{2}\bigg(\frac{1}{N}\bigg) \bigg( \adnorm{P_{N} w}^2_{L^\infty_T L^2_{xy}} + T^\frac{1}{2}\adnorm{J_x^{0+}P_{N} f_w}^2_{L^{\infty-}_{T} L^{1}_{xy}} \bigg) \adnorm{P_{N_1}z}_{L^\infty_T L^2_{xy}} \label{A1fwfwBoundary} \\
&\hspace{2cm}+T^\frac{1}{2}N_1^{-\frac{3}{4}}N^{-\frac{1}{4}} \bigg( \adnorm{P_{N} w}_{L^\infty_T L^2_{xy}} + T^\frac{1}{2}\adnorm{J_x^{0+}P_{N} f_w}_{L^{\infty-}_{T} L^{1}_{xy}} \bigg) \label{A1fwfzBoundary} \\
& \hspace{4cm}\times\bigg( \adnorm{P_{N_1} z}_{L^\infty_T L^2_{xy}} + T^\frac{1}{2}\adnorm{J_x^{0+}P_{N_1} f_z}_{L^{\infty-}_{T} L^{1}_{xy}} \bigg)\|P_N w\|_{L^\infty_T L^2_{xy}} \bigg).\nonumber
\end{align}
Dealing with the latter terms first, since $\abrac{N}^{2s}N_1 N^{-1} \vee \abrac{N}^{2s}N_1 N^{-1/4}N_1^{-3/4} \lesssim N_1^{2s}$, it is clear that for $N_1 \ll N$ and by Lemma \ref{NonlinearityEstimates}, one has
\begin{multline*} 
\eqref{A1fwfwBoundary} \lesssim \sum_{\substack{0 < N_1 \ll N \\ N \gtrsim 1}}T^\frac{1}{2}N_1^{2s}\bigg( \adnorm{P_{N} w}^2_{L^\infty_T L^2_{xy}} + T^\frac{1}{2}\adnorm{J_x^{0+}P_{N} f_w}^2_{L^{\infty-}_{T} L^{1}_{xy}} \bigg) \adnorm{P_{N_1}z}_{L^\infty_T L^2_{xy}}  \\
 \lesssim T^\frac{1}{2}\adnorm{w}_{L^\infty_T H^{0+,0}}^2\sum_{i=1}^2\adnorm{u_i}_{L^\infty_T H^{2s,0}}\big(1+\| u_i\|_{L^\infty_T H^{0+,0}}\big)^2.
\end{multline*}
Similarly,
\begin{multline*}
\eqref{A1fwfzBoundary} \lesssim \sum_{\substack{0 < N_1 \ll N \\ N \gtrsim 1}}T^\frac{1}{2}N_1^{2s} \bigg( \adnorm{P_{N} w}_{L^\infty_T L^2_{xy}} + T^\frac{1}{2}\adnorm{J_x^{0+}P_{N} f_w}_{L^{\infty-}_{T} L^{1}_{xy}} \bigg) \\
\times\bigg( \adnorm{P_{N_1} z}_{L^\infty_T L^2_{xy}} + T^\frac{1}{2}\adnorm{J_x^{0+}P_{N_1} f_z}_{L^{\infty-}_{T} L^{1}_{xy}} \bigg) \adnorm{P_{N}w}_{L^\infty_T L^2_{xy}}  \\
 \lesssim T^\frac{1}{2}\adnorm{w}_{L^\infty_T H^{0+,0}}^2\sum_{i=1}^2\adnorm{u_i}_{L^\infty_T H^{2s+,0}}\big(1+\| u_i\|_{L^\infty_T H^{0+,0}}\big)^2.
\end{multline*}
For the former term \eqref{A1fwfzInterior}, from Proposition \ref{L2TimeEst} one can get
\begin{align*} 
T\adnorm{w}_{L^2_T L^2_{xy}}^2 + T^{-1}\adnorm{f_w}_{L^2_T L^2_{xy}}^2 &\lesssim 
\adnorm{w}_{L^\infty_T L^2_{xy}}^2 +  T^{0+}\adnorm{w}_{L^\infty_T L^2_{xy}}^2\adnorm{z}_{L^{2+}_T L^\infty_T}^2 \\
&\lesssim \adnorm{w}^2_{L^\infty_T L^2_{xy}}\bigg(1+ \sum_{i=1}^2 \|u_i\|_{L^\infty_T H^{0+,0}}\Big(1 + \|u_i\|_{L^\infty_T L^2_{xy}}  \Big) \bigg)^2.
\end{align*}
Furthermore since $\abrac{N}^{2s}N_1 N^{-\frac{5}{4}}N_1^{-\frac{3}{4}} \lesssim N^{2s - 1}$ with Proposition \ref{L2TimeEst} again
\begin{multline*} \sum_{\substack{0 < N_1 \ll N \\ N \gtrsim 1}}\bigg( \|P_{N_1}z\|_{L^\infty_T L^2_{xy}} + N_1^\frac{1}{2}\|P_{N_1}f_z \|_{L^2_{T}L^2_{xy}} \bigg)N^{2s - 1}  \lesssim \sum_{i=1}^2 \adnorm{u_i}_{L^\infty_T L^2_{xy}}\big(1 + T^{0+}\adnorm{u_i}_{L^{2+}_T L^\infty_{xy}}\big) \\
\lesssim \sum_{i=1}^2 \adnorm{u_i}_{L^\infty_T H^{0+,0}}\big(1 + \adnorm{u_i}_{L^\infty_T H^{0+,0}}\big)^2.
\end{multline*}
Hence we can get the overall estimate, with $2s = 0+$
\[ |A_1| \lesssim \adnorm{w}_{L^\infty_T H^{0+,0}}^2\bigg( \sum_{i=1}^2 \|u_i\|_{L^\infty_T H^{0+,0}} \big(1 + \adnorm{u_i}_{L^\infty_T H^{0+,0}} \big)^6 \bigg).\]
For $A_2$, we can reduce similarly; indeed for $a_3(\xi_1, \xi_2) = \frac{\xi_1 + \xi_2}{N}\phi_N(\xi_1 + \xi_2)$, which can also be seen to be acceptable with respect to \eqref{acceptable}, it follows that
\[ |A_2| \lesssim \sum_{0< N_1 \ll N} \abrac{N}^{2s} N \left| \int_0^T \int_{\R^2} \Lambda_{a_3}(\tilde{P}_Nz, P_{N_1}w) \cdot P_Nw dx dy dt\right|.\]
Again applying Proposition \ref{tritri}
\begin{align}
|A_2| &\lesssim \sum_{\substack{0 < N_1 \ll N \\ N \gtrsim 1}}\abrac{N}^{2s}N  \left( T\adnorm{P_{N}w}_{L^2_T L^2_{xy}} +  T^{-1}\adnorm{P_{N}f_w}_{L^2_T L^2_{xy}} \right) \nonumber\\
&\hspace{2cm}\times \left(T\adnorm{P_{N}z}_{L^2_T L^2_{xy}} +  T^{-1}\adnorm{P_{N}f_z}_{L^2_T L^2_{xy}} \right) \label{A2former}\\
&\hspace{3cm}\times \left( \adnorm{P_{N_1}w}_{L^\infty_T L^2_{xy}} + T^\frac{1}{2} \bigg(  \frac{N_1}{N}\bigg)^\frac{1}{2}\sup_{\substack{I \subset [0,T] \\ |I| \sim TN^{-1}}}  N_1^\frac{1}{2}\adnorm{ P_{N_1}f_{w}}_{L^2_I L^2_{xy}} \right)N^{-\frac{5}{4}}N_1^{-\frac{3}{4}} \nonumber\\
& +\sum_{\substack{0 < N_1 \ll N \\ N \gtrsim 1}} \abrac{N}^{2s}NT^\frac{1}{2}\bigg(\frac{1}{N}\bigg) \bigg( \adnorm{P_{N} w}_{L^\infty_T L^2_{xy}} + T^\frac{1}{2}\adnorm{J_x^{0+}P_{N} f_w}_{L^{\infty-}_{T} L^{1}_{xy}} \bigg) \nonumber \\
& \hspace{3cm} \times\bigg( \adnorm{P_{N} z}_{L^\infty_T L^2_{xy}} + T^\frac{1}{2}\adnorm{J_x^{0+}P_{N} f_z}_{L^{\infty-}_{T} L^{1}_{xy}} \bigg) \adnorm{P_{N_1}w}_{L^\infty_T L^2_{xy}} . \label{A2latter}
\end{align}
Like earlier, dealing with the latter term first, it is clear we can bound using Lemma \ref{NonlinearityEstimates};
\begin{align*}
\eqref{A2latter}& \lesssim \adnorm{w}_{L^\infty_T H^{0+,0}}^2 \bigg(\sum_{i=1}^2 \big( 1 + \|u_i\|_{L^\infty_T H^{0+,0}} \big) \bigg) \times \bigg( \adnorm{z}_{L^\infty_T H^{2s,0}} + \|J_x^{2s+} f_z \|_{L^{\infty-}_T L^{1}_{xy}} \bigg) \\
& \lesssim \adnorm{w}_{L^\infty_T H^{0+,0}}^2\bigg(\sum_{i=1}^2 \|u_i\|_{L^\infty_T H^{2s+,0}} \big( 1 + \|u_i\|_{L^\infty_T H^{0+,0}} \big)^2 \bigg) .
\end{align*}
For the former term \eqref{A2former}, since we can cancel coefficients $\abrac{N}^{2s}NN^{-\frac{5}{4}} \lesssim 1$ bounds follow with another use of Lemmas \ref{NonlinearityEstimates}  and \ref{L2TimeEst}. First, note that
\begin{align*}
& \abrac{N}^{2s}NN^{-\frac{5}{4}}\bigg(T \|P_N w\|_{L^2_T L^2_{xy}} + T^{-1}\| P_N f_w \|_{L^2_T L^2_{xy}} \bigg) \times \bigg(T \|P_N z\|_{L^2_T L^2_{xy}} + T^{-1}\| P_N f_z \|_{L^2_T L^2_{xy}} \bigg) \\
& \hspace{2cm} \lesssim \| w \|_{L^\infty_T L^{2}_{xy}} (1 + \| z \|_{L^{2+}_T L^\infty_{xy}} ) \times \bigg(\sum_{i=1}^2 \|u_i\|_{L^\infty_T L^2_{xy}} (1 + \|u_i \|_{L^{2+}_T L^\infty_{xy}} ) \bigg) \\
& \hspace{2cm} \lesssim \| w \|_{L^\infty_T L^{2}_{xy}} \bigg( \sum_{i=1}^2 \|u_i \|_{L^\infty_T L^2_{xy}}(1 + \|u_i\|_{L^\infty_T H^{0+,0}})^4 \bigg).
\end{align*}
Then for $N_1 \geq 1$, those same lemmas imply
\begin{align*} 
&\sum_{0< N_1 \ll N}N_1^{-\frac{3}{4}}\left( \adnorm{P_{N_1}w}_{L^\infty_T L^2_{xy}} + T^\frac{1}{2} \bigg(  \frac{N_1}{N}\bigg)^\frac{1}{2}\sup_{\substack{I \subset [0,T] \\ |I| \sim TN^{-1}}}  N_1^\frac{1}{2}\adnorm{ P_{N_1}f_{w}}_{L^2_I L^2_{xy}} \right) \\
& \hspace{3cm} \lesssim \|w\|_{L^\infty_T L^2_{xy}}(1 + \|z \|_{L^{2+}_T L^\infty_{xy}}) \\
& \hspace{3cm} \lesssim \|w\|_{L^\infty_T L^2_{xy}} \bigg( \sum_{i=1}^2 (1 + \|u_i\|_{L^\infty_T H^{0+,0}})^2 \bigg).
\end{align*}
For $N_1 \leq 1$, the only term that differs is $N_1^{-\frac{3}{4}}\|P_{N_1}w\|_{L^\infty_T L^2_{xy}}$ since $N_1^{-\frac{3}{4}}$ is not summable. However since $w(0) = 0$, using a similar Strichartz estimate \eqref{Strichartz2} as is done in $\eqref{wStrichartzEst}$
\[ N_1^{-\frac{3}{4}}\|P_{N_1} w \|_{L^\infty_T L^2_{xy}} \lesssim  N_1^{-\frac{3}{4}} N_1 N_1^{\frac{3}{4}-}\|P_{N_1}(wz) \|_{L^{4/3+}_T L^1_{xy}} \lesssim N_1^{1-}\|w\|_{L^\infty_T L^2_{xy}} \|z\|_{L^\infty_T L^2_{xy}},\]
which is summable, and can lead to the same as above.
With the same choice of $s$ as before, we have overall
\[ |A_2| \lesssim \adnorm{w}_{L^\infty_T H^{0+,0}}^2\bigg( \sum_{i=1}^2 \|u_i\|_{L^\infty_T H^{0+,0}} \big(1 + \adnorm{u_i}_{L^\infty_T H^{0+,0}} \big)^6\bigg). \]
Finally for $A_3$ we can estimate using the same $a_3$ as used for $A_2$;
\[ |A_3| \lesssim \sum_{1 \lesssim N \lesssim N_1}\abrac{N}^{2s}N \left| \int_0^T \int_{\R^2} \Lambda_{a_3}(P_{N_1}w, P_{N_1}z) \cdot P_N w dt dx dy \right|.\]
With Proposition \ref{tritri} once more, we have
\begin{align*}
|A_3| & \lesssim \sum_{1 \lesssim N \lesssim N_1} \abrac{N}^{2s}N  \Big( T\|P_{N_1}w\|_{L^2_T L^2_{xy}} + T^{-1}\|P_{N_1}f_w \|_{L^2_T L^2_{xy}} \Big) \\
& \hspace{2cm}\times \Big( T \|P_{N_1}z\|_{L^2_T L^2_{xy}} + T^{-1}\|P_{N_1}f_z\|_{L^2_T L^2_{xy}} \Big) \\
& \hspace{3cm}\times \left( \|P_N w \|_{L^\infty_T L^2_{xy}} +T^\frac{1}{2}\bigg(\frac{N}{N_1}\bigg)^\frac{1}{2}\sup_{\substack{I \subset[0,T] \\ |I| \sim TN_1^{-1}}} N^\frac{1}{2} \|P_N f_w \|_{L^2_I L^2_{xy}} \right)N_1^{-\frac{5}{4}}N^{-\frac{3}{4}} \\
& + \sum_{1 \lesssim N \lesssim N_1}T^\frac{1}{2}\bigg( \frac{1}{N_1} \bigg) \Big( \|P_{N_1}w \|_{L^\infty_T L^2_{xy}} + T^\frac{1}{2}\|J_x^{0+}P_{N_1} f_w \|_{L^{\infty-}_T L^{1+}_{xy}} \Big) \\
& \hspace{3cm}\times \Big( \|P_{N_1}z \|_{L^\infty_T L^2_{xy}} + T^\frac{1}{2}\|J_x^{0+}P_{N_1}f_z \|_{L^{\infty-}_T L^{1+}_{xy}} \Big)\|P_N w \|_{L^\infty_T L^2_{xy}} .
\end{align*}
Observe that this is better than the estimate $\eqref{A2former}+\eqref{A2latter}$ for $A_2$, with the roles of $N$ and $N_1$ swapped; meaning by the exact same argument we obtain
\[  |A_3| \lesssim \adnorm{w}_{L^\infty_T H^{0+,0}}^2\bigg( \sum_{i=1}^2 \|u_i\|_{L^\infty_T H^{0+,0}} \big(1 + \adnorm{u_i}_{L^\infty_T H^{0+,0}} \big)^6\bigg). \]
All three components are thus covered, and the result shown.
\end{proof}

\begin{proof}[Proof of Theorem \ref{mainTheorem}]

Suppose we are given two solutions $u_1,u_2 \in C_T H^{s,0}$ to the $5$th Order KP equations \eqref{KPEquation} with same initial data $u_0$, for some time of existence $T > 0$. We shall conclude unconditional uniqueness via a standard dilation argument. First observe that for $\lambda > 0$, $u_{i,\lambda}(t,x,y) := \lambda^4u_i(\lambda^5t , \lambda x, \lambda^3y) \in C([0, T\lambda^{-5}]; H^{0+,0})$ is also a solution to the $5$th order KP equation \eqref{KPEquation}, now with initial data $u_{0,\lambda} = \lambda^4u_0(\lambda x, \lambda^3y)$. The norm of this dilation can be bounded as
\[ \|u_{i,\lambda}(t)\|_{H^{0+,0}} \lesssim \lambda^2 \|u_i(\lambda^5t) \|_{H^{0+,0}}. \] 
Therefore, for any $0 < \varepsilon \ll 1$, taking
\[ \lambda_\varepsilon = \varepsilon^\frac{1}{2}(1 + \max_{i = 1,2}\|u_i\|_{L^\infty_T H^{0+,0}})^{-\frac{1}{2}}\]
leads to $\|u_{i, \lambda_\varepsilon}\|_{L^\infty_{T_\varepsilon} H^{0+,0}} \lesssim \varepsilon$ with a new time of existence
\[T_\varepsilon = \lambda_\varepsilon^{-5}T = \varepsilon^{-\frac{5}{2}}(1 + \max_{i=1,2}\|u_i\|_{L^\infty_T H^{0+,0}})^{\frac{5}{2}} T.\]
  It is important to note that as we take $\varepsilon$ smaller, the time of existence $T_\varepsilon$ grows unbounded; in particular, for $\varepsilon < T^{\frac{2}{5}}$, $T_{\varepsilon} > 1$. We may therefore assume (for $\varepsilon$ sufficiently small) that $u_{i, \lambda_\varepsilon}$ exists on at least the time interval $[0,1]$. Applying then Proposition \ref{DifferenceEst} to $w_{\varepsilon} := u_{1,\lambda_{\varepsilon}} - u_{2, \lambda_\varepsilon}$ gives us
\[ \|w_\varepsilon \|_{L^\infty([0,1]; H^{0+,0})}^2 \leq C\|w_\varepsilon\|_{L^\infty([0,1]; H^{0+,0})}^2 \varepsilon(1 + \varepsilon)^6. \] 
Therefore, for $\varepsilon$ sufficiently small, that is, such that $C\varepsilon(1 + \varepsilon)^6 < 1$ and $\varepsilon < T^\frac{2}{5}$, we conclude $u_{1, \lambda_\varepsilon} \equiv u_{2, \lambda_\varepsilon}$ on $[0,1]$. Reversing the dilation this gives $u_1 \equiv u_2$ on $[0, \varepsilon^{\frac{5}{2}}(1 + \max_{i=1,2}\|u_i\|_{L^\infty_T H^{0+,0}})^{-\frac{5}{2}} ]$, which can then be extended to $[0,T]$ by iterating the argument.
\end{proof}

\begin{ackno}
The author would like to thank their supervisor Yuzhao Wang for suggesting the problem, and Mahendra Prasad Panthee for proof reading and useful comments.
Supported by EPSRC Mathematical Sciences Small Grant.
\end{ackno}

\appendix
\tocless\section{Proof of Lemma \ref{resonance}}
\addcontentsline{toc}{section}{Appendix}
\begin{proof}
See Lemma 3.4 in Hadac \cite{hadac2008well} for more generality of exponent, but we include our case here for completeness.
Suppose that $|\xi_\text{min}| = |\xi_1|$; note that we will always have $|\xi_\text{min}| \leq \frac{1}{2}|\xi_\text{max}|$ and $|\xi_\text{med}| \geq \frac{1}{2}|\xi_\text{max}|$. Then
\[ |\omega(\xi_1)| = |\xi_\text{min}|^{5} \leq \frac{1}{2^4}|\xi_\text{min}||\xi_\text{max}|^4,\]
furthermore, by the mean value theorem, for some $\theta \in [0,1]$ 
\[|\omega(\xi) - \omega(\xi - \xi_1)| = |\omega'(\xi - \theta\xi_1)||\xi_1| = 5|\xi - \theta\xi_1|^4|\xi_\text{min}|.\]
Since $|\xi_1| \leq |\xi|$
\[ \min_{\theta \in [0,1]}|\xi - \theta\xi_1| = \min\{|\xi|, |\xi-\xi_1|\} = |\xi_\text{med}| \geq \frac{1}{2}|\xi_\text{max}|\]
\[ \max_{\theta \in [0,1]}|\xi - \theta\xi_1| = \max\{|\xi|, |\xi - \xi_1|\} = |\xi_\text{max}|.\]
Hence
\begin{align*}
|\Omega(\xi,\xi_1)| &\geq |\omega(\xi) - \omega(\xi - \xi_1)| - |\omega(\xi_1)| \\
& \geq 5\frac{1}{2^4}|\xi_\text{max}|^4|\xi_\text{min}| - \frac{1}{2^4}|\xi_\text{max}|^4|\xi_\text{min}| = \frac{4}{2^4}|\xi_\text{max}|^4|\xi_\text{min}|;
\end{align*}
\begin{align*}
|\Omega(\xi,\xi_1)| &\leq |\omega(\xi) - \omega(\xi - \xi_1)| + |\omega(\xi_1)| \\
& \leq 5|\xi_\text{max}|^4|\xi_\text{min}| + \frac{1}{2^4}|\xi_\text{max}|^4|\xi_\text{min}| = (5 +\frac{1}{2^4})|\xi_\text{max}|^4|\xi_\text{min}|;
\end{align*}
Which completes the proof in the case $|\xi_1| = |\xi_\text{min}|$. That the cases $|\xi - \xi_1|$ or $|\xi|$ are equal to $|\xi_\text{min}|$ follow from $\Omega(\xi,\xi_1) = \Omega(\xi,\xi - \xi_1) = - \Omega(\xi-\xi_1, \xi_1)$ and repeating the argument above.
\end{proof}
\tocless\section{Proof of Lemma \ref{Commutator}}
\begin{proof}
By the mean value theorem, 
$$
\|a_1(\xi_1,\xi_2)\|_{L^\infty(\R^2}\lesssim N_3^{-1} \phi_{N_3}(\xi_1) |\xi_1| \sup_{\R} \Bigl(\phi_N+|\phi'(\frac{\cdot}{N})(\frac{\cdot}{N})| \Bigr)\lesssim 1 \; ,
$$
hence the second part is shown. For the first we show by writing the commutator with $a:= P_{N_3}g, b:= \tilde{P}_N h$; then
\[ N_3\Lambda_{a_1}(g,h) = [\partial_x P_N, P_{N_3}g]\tilde{P}_Nh = \partial_x P_N( P_{N_3}g \tilde{P}_N h) - P_{N_3}g\partial_x P_N \tilde{P}_N h = \partial_x P_N(ab) - a\partial_x P_N b \]
which can be written as integration of $b$ against kernel $K(x,y) := N^2(a(x) - a(y))\Phi(N(x-y))$ where $\Phi$ is some Scwharz function.
More specifically, this Schwarz function is given by the frequency projection function $\phi$ in the following way:
\[\Phi_N(x) :=  N^2 \Phi(Nx) := \mathcal{F}_x^{-1}[i \xi\phi(\frac{\xi}{N})] = \partial_x \widecheck{\phi}_N = N^2 \partial_x \widecheck{\phi}(Nx).\]
Once we consider the expressions
\[ \partial_x P_N(ab) = \int_\R i\xi \phi(\frac{\xi}{N}) \mathcal{F}_x[ab](\xi) e^{ix\xi} d\xi = \Phi_N * ab (x) = \int_\R a(y)b(y)\Phi_N(x-y) dy\]
\[ a\partial_xP_Nb = a(x) \int_\R i\xi \phi(\frac{\xi}{N})\hat{b}(\xi) e^{ix\xi}d\xi = a(x) \Phi_N * b(x) = \int_\R a(x)b(y)\Phi_N(x-y) dy,\]
following from the correspondence between multiplication and convolution between Fourier transform, the form of the Kernel given before becomes clear.
So indeed the application of the commutator is equal to
\[ \int_{\R}K(x,y)b(y) dy.\]
Since, by mean value theorem,
\[ \sup_{x}\int_{\R}|K(x,y)| dy + \sup_{y}\int_{\R}|K(x,y)| dx \lesssim \|\partial_x a\|_{L^{\infty}},\]
it follows by Schur's test that
\[ \adnorm{ \int_\R K(x,y)b(y) dy }_{L^2_x} \lesssim \|\partial_x a\|_{L^\infty_x} \|b \|_{L^2_x} .\]
Substituting in $a,b$ and using Bernstein estimates on the $L^\infty$ norm cancels the remaining $N_3^{-1}$, yielding the desired result.
\end{proof}
\bibliographystyle{siam}
\bibliography{bibliography}
\end{document}